\newcommand\Acal{\mathcal{A}}
\newcommand\AND{\quad\text{and}\quad}
\newcommand\bd{\partial}
\newcommand\Ccal{\mathcal C}
\newcommand\de{\delta}
\newcommand\diam{\operatorname{\rm diam}}
\newcommand\EE{E}  
\newcommand\ep{\epsilon}
\newcommand\F{\mathbb F}
\newcommand\FF{F}  
\newcommand\impliess{\buildrel * \over \implies}
\newcommand\K{\mathbb K}
\newcommand\Pb{\mathbf{P}}
\newcommand\Pcal{\mathcal P}
\newcommand\PP{\mathsf{P}}
\newcommand\Q{\mathbf{Q}}
\newcommand\Si{\mathbf{\Sigma}}
\newcommand\T{\mathbb T}
\newcommand\then{\!\!\implies\!\!}
\newcommand\thens{\!\!\impliess\!\!}
\newcommand\wt{\widetilde}
\newcommand\V{\mathbf{V}}
\newcommand\VV{X}   
\newcommand\XX{X}   
\newcommand\Z{\mathbf{Z}}
\newcommand\ZZ{\mathbb{Z}}
\numberwithin{equation}{section}
\newtheoremstyle{mythm}
  {9pt}
  {9pt}
  {\itshape}
  {0pt}
  {\bfseries}
  {}
  { }
  {\thmnumber{(#2)}\thmname{ #1}\thmnote{ #3}}
\newtheoremstyle{mydef}
  {9pt}
  {9pt}
  {\normalfont}
  {0pt}
  {\bfseries}
  {}
  { }
  {\thmnumber{(#2)}\thmname{ #1}\thmnote{ #3}}
\theoremstyle{mythm}
\newtheorem{thm}[equation]{Theorem.}
\newtheorem{pro}[equation]{Proposition.}
\newtheorem{lem}[equation]{Lemma.}
\newtheorem{cor}[equation]{Corollary.}
\newtheorem{dlm}[equation]{Lemma/Definition.}
\theoremstyle{mydef}
\newtheorem{dfn}[equation]{Definition.}
\newtheorem{exa}[equation]{Example.}
\newtheorem{rmk}[equation]{Remark.}
\begin{document}$\,$ \vspace{-1truecm}
\title{Context-free pairs of groups \\ 
I - Context-free pairs and graphs}
\author{\bf Tullio CECCHERINI-SILBERSTEIN and Wolfgang WOESS}
\address{\parbox{.8\linewidth}{Dipartimento di Ingegneria, 
Universit\`a del Sannio,\\  
Corso Garibaldi 107, 82100 Benevento, Italy\\}}
\email{tceccher@mat.uniroma1.it}
\address{\parbox{.8\linewidth}{Institut f\"ur Mathematische Strukturtheorie,\\ 
Technische Universit\"at Graz,
Steyrergasse 30, 8010 Graz, Austria\\}}
\email{woess@TUGraz.at}
\date{October 31st, 2009} 
\thanks{The first author was partially supported by a visiting professorship
at TU Graz. The second author was partially supported by a visiting 
professorship at Universit\`a di Roma - La Sapienza and the
Austrian Science Fund project FWF-P19115-N18}
\subjclass[2000] {20F10, 
                  68Q45,  
                  05C25   
		  }
\keywords{Finitely generated pair of groups, word problem, context-free
language, pushdown automaton, context-free graph}
\begin{abstract} Let $G$ be a finitely generated group, $A$ a finite set of
generators and $K$ a subgroup of $G$. We call the pair $(G,K)$ context-free
if the set of all words over $A$ that reduce in $G$ to an element of $K$
is a context-free language. When $K$ is trivial, $G$ itself is called
context-free; context-free groups have been classified more than 20 years ago
in celebrated work of Muller and Schupp as the virtually free groups.

Here, we derive some basic properties of such group pairs. Context-freeness
is independent of the choice of the generating set. It is preserved
under finite index modifications of $G$ and finite index enlargements of $K$.
If $G$ is virtually free and $K$ is finitely generated then $(G,K)$ is
context-free. A basic tool is the following: 
$(G,K)$ is context-free if and only if the Schreier graph of $(G,K)$ with
respect to $A$ is a context-free graph.  
\end{abstract}

\maketitle

\markboth{{\sf T. Ceccherini-Silberstein and W. Woess}}
{{\sf Context-free pairs of groups}}
\baselineskip 15pt

\section{Introduction}\label{sec:intro}
Let $G$ be a finitely generated group and $K$ a subgroup (not necessarily 
finitely generated). 
We can choose a finite set $A \subset G$ of generators  
such that every element of $G$ is of the form $g=g_1 \cdots g_n\,$, where
$n \ge 0$ and $g_1, \dots, g_n \in A$. Thus, $A$ generates $G$ as a semigroup.
We shall say that $(G,K)$ is \emph{context-free,} if -- loosely
spoken -- the language of all words over $A$ that represent an element
of $K$ is context-free. 

The precise definition needs some preparation.
Let $\Si$ be a finite \emph{alphabet} and $\psi: \Si \to G$
be a (not necessarily injective) mapping such that $A = \psi(\Si)$
satisfies the above finite generation property for $G$.
Then $\psi$ has a unique extension, also denoted $\psi$, as a monoid
homomorphism $\psi: \Si^* \to G$. 
Recall that $\Si^*$ consists of all \emph{words} $w=a_1 \cdots a_n$,
where $n \ge 0$ and $a_1, \dots, a_n \in \Si$ (repetitions allowed).
The number $n$ is the \emph{length} $|w|$ of $w$.
If $n = 0$ this means that $w = \epsilon$, the \emph{empty word}. 
This is the neutral element of $\Si^*$, and $\Si^*$ is a free monoid with the
binary operation of \emph{concatenation} of words. The extension of $\psi$ 
is of course given by
$$
\psi(a_1 \cdots a_n) = \psi(a_1) \cdots \psi(a_n)\,,
$$
where the product on the right hand side is taken in $G$.
Given these ingredients, we shall say that $\psi:\Si \to G$ is a
\emph{semigroup presentation} of $G$, referring to the fact that $A$ 
generates $G$ as a semigroup. 
A \emph{language} over $\Si$ is a non-empty subset of $\Si^*$.

\begin{dfn}\label{def:wordproblem}
The  \emph{word problem} of $(G,K)$ with respect to $\psi$ is the language
$$
L(G,K,\psi) = \{ w \in \Si^* : \psi(w) \in K \}\,.
$$
We say that the triple $(G,K,\psi)$ is \emph{context-free,} 
if $L(G,K,\psi)$ is a context-free language. 
\end{dfn}

A \emph{context-free grammar} is a quadruple $\Ccal = (\V,\Si,\Pb,S)$, 
where  $\V$ is a finite set of \emph{variables,} disjoint from 
the finite alphabet $\Si$ (the \emph{terminal symbols}), the variable  
$S$ is the \emph{start symbol,} and $\Pb \subset \V \times (\V \cup \Si)^*$ is a 
finite set of \emph{production rules.} We write $T \vdash u$ or 
$(T \vdash u) \in \Pb$ if $(T,u) \in \Pb$.  For $v, w \in (\V \cup \Si)^*$, 
we write $v \then w$ if $v = v_1Tv_2$ and $w=v_1uv_2$, where 
$u, v_1, v_2 \in (\V \cup \Si)^*$ and $T \vdash u$. This is a single derivation
step, and it is called \emph{rightmost,} if $v_2 \in \Si^*$. 
A \emph{derivation} is a sequence
 $v=w_0,w_1, \dots,w_k=w \in (\V \cup \Si)^*$ such 
that $w_{i-1} \then w_i\,$; we then write $v \thens w$. A  
\emph{rightmost derivation} is one where each step is rightmost. 
The succession of steps of any derivation $T \thens w \in \Si^*$ can be 
reordered so that it becomes a rightmost derivation.
For $T \in \V$, we consider the language  
$L_T = \{ w \in \Si^* : T \thens w \}$. The \emph{language generated by} 
$\Ccal$ is $L(\Ccal) =L_S$.  
  
A \emph{context-free language} is a language generated by a 
context-free grammar. 
As a basic reference for Language and Automata Theory, we refer to
the magnificent monograph of {\sc Harrison~\cite{Ha}}.

The above definition of a context-free pair, or rather triple, $(G,K,\psi)$
makes sense when $G$ is a finitely generated monoid and $K$ is a sub-monoid, 
but here  we are interested in groups. 
When in addition $K = \{ 1_G \}$, this leads 
to the notion of $G$ being a \emph{context-free group.} In two celebrated papers, 
{\sc Muller and Schupp~\cite{MS1}, \cite{MS2}} have carried out a detailed 
study of context-free groups and more generally, context-free \emph{graphs}. In
particular, context-freeness of a group is independent of the particular 
choice of the generating set $A$ of $G$. The main result of \cite{MS1},
in combination with a fundamental theorem of {\sc Dunwoody}~\cite{Du2},
is that a finitely generated group is context-free if and only if it is
virtually free, that is, it contains a free subgroup with finite index.
(In \cite{MS1}, it is assumed that $A=A^{-1}$ and that 
$\psi:\Si \to A=\psi(\Si)$ 
is  one-to-one, but the results carry over immediately to the more general 
setting where those two properties are not required.) 

Previously, {\sc Anisimov}~\cite{An} had shown that the groups whose word 
problem
$L(G,\{1_G\},\psi)$ is \emph{regular} (see \S \ref{sec:Schreier} for the
definition) are precisely the finite groups.

The abovementioned context-free graphs are labelled, rooted graphs with finitely many isomorphism
classes of \emph{cones}. The latter are the connected components of the graph 
that remain after removing a ball around the root with arbitrary radius. See 
\S \ref{sec:cfgraphs} for more precise details. As shown in \cite{MS2}, there is
a natural correspondence between such graphs and \emph{pushdown automata},
which are another tool for generating context-free languages; see 
\S \ref{sec:pushdown}.

Among subsequent work, we mention {\sc P\'elecq}~\cite{Pe} and 
{\sc S\'enizergues}~\cite{Sen}, who studied actions on, resp. quotients of 
context-free graphs. Group-related examples occur also in 
{\sc Ceccherini-Silberstein and Woess}~\cite{CeWo1}.

More recently, {\sc Holt,  Rees,  R\"over and Thomas}~\cite{Ho} have introduced and studied
\emph{co-context-free} groups, which are such that the complement of 
$L(G,\{1_G\},\psi)$ is context-free, see also {\sc Lehnert and Schweitzer}~\cite{LeSc}.
This concept has an obvious extension to co-context-free pairs of groups,
resp. graphs, on whose examination we do not (yet) embark. 

In the present notes, we collect properties and examples of context-free pairs
of groups $(G,K)$.

\begin{itemize}
\item The language $L(G,K,\psi)$ is regular if and only if the index $[G:K]$
of $K$ in $G$ is finite  (Proposition \ref{pro:regular}).
\item The property that $L(G,K,\psi)$ is context-free does not depend
on the specific choice of the semigroup presentation $\psi$, so that
context-freeness is just a property of the pair $(G,K)$, a consequence of
Lemma \ref{lem:translator}.
\item If $(G,K)$ is context-free then $L(G,K,\psi)$ is a deterministic
context-free language (see \S \ref{sec:pushdown} for the definition)
for any semigroup presentation $\psi: \Si \to G$ (Corollary \ref{cor:pairs}.a).
\item If $(G,K)$ is context-free and $H$ is a finitely generated subgroup 
of $G$, then the pair $(H,K\cap H)$ is context-free (Lemma
\ref{lem:translator}).
\item If $[G:H] < \infty$ then $(G,K)$ is 
context-free if and only if $(H,K\cap H)$ is context-free (Proposition
\ref{pro:extend} \& Lemma \ref{lem:normal}).
\item If $(G,K)$ is context-free and $H$ is a subgroup of $G$ with $K \le H$ 
and $[H:K] < \infty$ then $(G,H)$ is context-free (Lemma \ref{lem:normal}).
\item If $K$ is finite then $G$ is context-free if and only if $(G,K)$
is context-free (Lemma \ref{lem:Kfinite}).
\item If $(G,K)$ is context-free then $(G,g^{-1}Kg)$ is context-free for every
$g \in G$ (Corollary \ref{cor:pairs}.b). 
\item If $G$ is virtually free and $K$ is a finitely generated subgroup of 
$G$ then $(G,K)$ is context-free (Corollary \ref{cor:fingen}).
\end{itemize}
Several of these properties rely on the following.
\begin{itemize}
\item A fully deterministic, symmetric labelled graph (see 
\S \ref{sec:Schreier} for definitions) is context-free in the sense of
Muller and Schupp if and only if the language of all words which are labels
of a path that starts and ends at a given root vertex is context-free 
(Theorems \ref{thm:characterize1} and \ref{thm:characterize2}).
\end{itemize}

The (harder) ``if'' part is not contained in
previous work. It implies the following.

\begin{itemize}
\item The pair $(G,K)$ is context-free if and only if for some ($\!\!\iff\!$
any) symmetric semigroup presentation 
$\psi: \Si\to G$, the \emph{Schreier graph} of $(G,K)$ with respect to 
$\psi$ is a context-free graph. (See again \S \ref{sec:Schreier} for 
precise definitions). 
\end{itemize}

In a second paper \cite{Wcf2}, a slightly more general approach to 
context-freeness of graphs via cuts and tree-sets is given. It allows to 
show that certain
structural properties (``irreducibility'') are preserved under 
finite-index-modifications of the underlying pair of groups. This is then
applied to random walks, leading in particular to results on the asymptotic
behaviour of transition probabilities.

In concluding the Introduction, we remark that with the exception of some 
``elementary'' cases, context-free pairs of groups are always pairs with 
more than one \emph{end}. Ends of pairs of groups were studied, e.g., by 
{\sc Scott}~\cite{Sc}, {\sc Swarup}~\cite{Sw} and {\sc Sageev}~\cite{Sa}. 
In particular, the interplay between context-freeness of pairs and
decomposition as amalgamated products or HNN-extensions needs still
to be explored.  

\smallskip

\noindent{\bf Acknowledgement.} We are grateful to Wilfried Imrich 
and R\"ognvaldur G. M\"oller
for useful hints and discussions.


\section{Schreier graphs,  and the regular 
case}\label{sec:Schreier}

Let $\Si$ be a finite alphabet. 
A \emph{directed graph labelled by $\Si$} is a triple  
$(\VV, \EE, \ell)$, where $\VV$ is the (finite or countable) set of \emph{vertices,} 
$\EE \subset \VV \times \Si \times \VV$ is the set of \emph{oriented, labelled 
edges} and $\ell :\EE \ni (x,a,y) \mapsto a \in \Si$ is the \emph{labelling}
map. 

For an edge $e =(x,a,y) \in \EE$, its \emph{initial vertex} is $e^- = x$ and 
its \emph{terminal vertex} is $e^+ = y$, and we say that $e$ is \emph{outgoing}
from $x$ and \emph{ingoing} into $y$. If $y=x$ then $e$ is a \emph{loop},
which is considered both as an outgoing and as an ingoing edge. 
We allow \emph{multiple edges}, i.e., edges of the form 
$e_1 = (x, a_1, y)$ and $e_2 = (x, a_2, y)$ with 
$a_1 \neq a_2$, but here we exclude multiple edges where also the
labels coincide. The graph is always assumed to be \emph{locally finite}, that 
is, every vertex is an initial or terminal vertex of only finitely many edges.
We also choose a fixed vertex $o \in \VV$, the 
\emph{root} or \emph{origin.} We shall often just speak of the graph $\VV$,
keeping in mind the presence of $\EE$ and $\ell$.

We call $\XX$ \emph{fully labelled} if at every vertex, each 
$a \in \Sigma$ occurs as the label of at least one outgoing edge.
We say that $\XX$ is \emph{deterministic} if at every vertex all outgoing edges 
have distinct labels, and \emph{fully deterministic} if
it is fully labelled and deterministic.
Finally, we say that $\XX$ is \emph{symmetric} or \emph{undirected}
if there is a proper involution $a \mapsto a^{-1}$ of $\Si$ (i.e., 
$(a^{-1})^{-1} = a$, excluding the possibility that 
$a^{-1}=a$) such that for each edge $e=(x,a,y) \in \EE$, also 
the \emph{reversed edge} $e^{-1}=(y,a^{-1},x)$ belongs to $\EE$. 


A \emph{path} in $\XX$ is a sequence $\pi = e_1 e_2 \dots e_n$ of edges such 
that $e_i^+ = e_{i+1}^-$ for $i=1,\ldots, n-1$. The vertices $\pi^- = e_1^-$ 
and $\pi^+ = e_n^+$ are the \emph{initial} and the \emph{terminal} vertex of
$\pi$. The number  $|\pi| =n$ is the \emph{length} of the path.
The \emph{label} of $\pi$ is 
$\ell(\pi) = \ell(e_1) \ell(e_2) \cdots \ell(e_n) \in \Si^*$. We also admit 
the \emph{empty path} starting and ending at a vertex $x$, whose label is
$\epsilon$. 
Denote by 
$\Pi_{x,y}=\Pi_{x,y}(\XX)$ the set of all paths $\pi$ in $\XX$ with initial 
vertex $\pi^- = x$ and terminal vertex $\pi^+ = y$. The following needs no
proof.

\begin{dlm}\label{def:pathfromx}
Let $(\VV, \EE, \ell)$ be a labelled graph, $x \in \VV$ and $w \in \Si^*$.
We define $\Pi_x(w) = \{ \pi : \pi^-=x\,,\; \ell(\pi) = w\}$, the set
of all paths that start at $x$ and have label $w$. The set of all terminal
vertices of those paths is denoted $x^w = \{ \pi^+ : \pi \in \Pi_x(w)\}$.

Analogously, we define  $\overline \Pi_x(w) = 
\{ \pi : \pi^+=x\,,\; \ell(\pi) = w\}$, the set
of all paths that terminate at $x$ and have label $w$, and write 
$x^{-w} = \{ \pi^- : \pi \in \overline \Pi_x(w)\}$.

If $\XX$ is fully labelled, then $\Pi_x(w)$ is always non-empty.

If $\XX$ is deterministic, then $\Pi_x(w)$ has at most one element, and
if that element exists, it is denoted $\pi_x(w)$, while $x^w$ just denotes
its endpoint. 

If $\XX$ is fully deterministic, then $x^w$ is a unique
vertex of $\XX$ for every $x\in \VV$, $w \in \Si^*$.

Finally, if $\XX$ is symmetric (not necessarily deterministic), 
then $\overline \Pi_x(w) = \Pi_x(w^{-1})$, where for $w=a_1 \cdots a_n$,
one defines $w^{-1}= a_n^{-1} \cdots a_1^{-1}$.
\end{dlm}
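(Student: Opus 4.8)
As the author signals, nothing here goes beyond unwinding the definitions, so the plan is just to record the elementary bookkeeping. The first two paragraphs of the statement are pure definitions and need no verification. What remains are four assertions: (i) full labelledness $\Rightarrow \Pi_x(w)\neq\emptyset$; (ii) determinism $\Rightarrow |\Pi_x(w)|\le 1$; (iii) the fully deterministic case, which is just the conjunction of (i) and (ii); and (iv) the identity $\overline{\Pi}_x(w)=\Pi_x(w^{-1})$ in the symmetric case. I would prove (i) and (ii) by induction on $n=|w|$, and (iv) by exhibiting an explicit involution on paths.

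For (i): if $n=0$ then $w=\epsilon$ and the empty path at $x$ lies in $\Pi_x(\epsilon)$. If $w=w'a$ with $a\in\Si$, the induction hypothesis gives some $\pi'\in\Pi_x(w')$; setting $y=(\pi')^+$, full labelledness supplies an outgoing edge $e$ at $y$ with $\ell(e)=a$, and then $\pi'e\in\Pi_x(w)$. For (ii): the case $w=\epsilon$ holds since the only length-$0$ path starting at $x$ is the empty one. If $w=w'a$, any $\pi\in\Pi_x(w)$ factors uniquely as $\pi=\sigma e$ with $\sigma^-=x$, $\ell(\sigma)=w'$, $\ell(e)=a$; by the induction hypothesis $\sigma$, hence $y=\sigma^+$, is uniquely determined, and determinism leaves at most one $a$-labelled edge out of $y$, so $e$ is forced as well. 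Thus $|\Pi_x(w)|\le 1$; when the element exists it is called $\pi_x(w)$, and $x^w$ is its terminal vertex. Then (iii) is immediate: (i) yields existence and (ii) uniqueness, so $x^w$ is a single well-defined vertex for all $x\in\VV$, $w\in\Si^*$.

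For (iv) I would, for a path $\pi=e_1 e_2\cdots e_n$, define its reversal $\pi^{-1}=e_n^{-1}\,e_{n-1}^{-1}\cdots e_1^{-1}$. Symmetry guarantees each $e_i^{-1}\in\EE$, and the incidences match since $(e_{i+1}^{-1})^+=e_{i+1}^-=e_i^+=(e_i^{-1})^-$, so $\pi^{-1}$ is a genuine path; moreover $(\pi^{-1})^-=\pi^+$, $(\pi^{-1})^+=\pi^-$, $\ell(\pi^{-1})=\ell(\pi)^{-1}$, and $(\pi^{-1})^{-1}=\pi$. Hence $\pi\mapsto\pi^{-1}$ restricts to a bijection $\overline{\Pi}_x(w)\to\Pi_x(w^{-1})$ that is its own inverse, which is exactly the asserted equality (and it yields $x^{-w}=x^{w^{-1}}$ at the level of vertex sets).

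I do not anticipate any genuine obstacle anywhere. The only point that calls for a moment's care is the unique factorisation $\pi=\sigma e$ used in (ii), which rests on the standing convention that edges are triples $(x,a,y)$ with no two coinciding labels on a multi-edge; and in (iv) one should note that the condition $a^{-1}\neq a$ makes edge-reversal behave as stated, though the identity itself only involves the formal map $w\mapsto w^{-1}$ and so is insensitive to that subtlety. All of this is why the statement is correctly flagged as needing no proof.
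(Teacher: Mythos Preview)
Your proposal is correct; the paper explicitly says ``The following needs no proof'' and gives none, so there is nothing to compare against beyond the obvious unpacking of definitions, which you carry out cleanly.

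One small remark on (iv): as written in the paper, $\overline{\Pi}_x(w) = \Pi_x(w^{-1})$ cannot be a literal equality of sets of paths, since a path $\pi$ with $\pi^+=x$ and $\ell(\pi)=w$ does not satisfy $\pi^-=x$ and $\ell(\pi)=w^{-1}$. What is meant (and what you correctly prove) is the canonical bijection $\pi\mapsto\pi^{-1}$ between the two sets; the induced identity $x^{-w}=x^{w^{-1}}$ on vertex sets is then a genuine equality. Your phrasing ``which is exactly the asserted equality'' mirrors the paper's informality, but it is worth being aware of the distinction.
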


With a labelled, directed graph as above, 
we can associate various languages. We can, e.g., consider the language
\begin{equation}\label{eq:Lxy}
L_{x,y} = L_{x,y}(\XX) = \{ \ell(\pi) : \pi \in \Pi_{x,y}(\XX)\}\,,
\quad\text{where}\;\ x,y \in \VV\,.
\end{equation}

\begin{dfn}\label{def:schreier}
Let $G$ be a finitely generated group, $K$ a subgroup and 
$\psi : \Si \to G$ a semigroup presentation of $G$. 
The \emph{Schreier graph} $\XX = \XX(G,K,\psi)$ has vertex set
$$
\VV= K \backslash G = \{ Kg: g \in G \}
$$ 
(the set of all right $K$-cosets in $G$), and the set of labelled, directed
edges 
$$
\EE = \{e = (x,a,y): x = Kg\,,\; y = Kg\psi(a)\,,\;\text{where}\; 
g \in G\,,\; a \in \Si\}\,.
$$
\end{dfn}

$\XX$ is a rooted graph with origin $o = K$, the right coset corresponding 
to the neutral element $1_G$ of the group $G$. The Schreier graph is fully 
deterministic. It is also \emph{strongly connected:} for every pair 
$x, y \in \VV$, there is a path from $x$ to $y$. (This follows from the
fact that $\psi(\Si)$ generates $G$ as a semigroup.) When $K = \{1_G\}$ then
we write $X(G,\psi)$. This is the \emph{Cayley graph} of $G$ with respect
to $\psi$, or more loosely speaking, with respect to the set $\psi(\Si)$
of generators.

Note that $\XX$ can have the loop $e = (x,a,x) \in \EE$ with $x = Kg$. This
holds if and only if $\psi(a) \in g^{-1}Kg$. It can also have the multiple edges 
$e_1 = (x, a_1, y)$ and $e_2 = (x, a_2, y)$ with $x = Kg$ and $a_1 \neq a_2$.
This occurs 
if and only if  $\psi(a_2)\psi(a_1)^{-1} \in g^{-1}Kg$. In particular, there 
might be multiple loops. The following is obvious.

\begin{lem}\label{lem:cfpaths} 
Let $K$ be a subgroup of $G$ and $\psi : \Si \to G$ be a semigroup presentation of $G$. 
Then
$$
L(G,K,\psi) = L_{o,o}(\XX)
$$
is the language of all labels of closed paths starting and ending at $o=K$
in the Schreier graph $\XX(G,K,\psi)$. 
\end{lem}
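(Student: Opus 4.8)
The plan is to track, for a word $w = a_1 \cdots a_n \in \Si^*$, the endpoint of the (essentially unique) path in $\XX$ that starts at the root $o = K$ and reads $w$. Since $\XX$ is fully deterministic, as noted right after Definition \ref{def:schreier}, Lemma/Definition \ref{def:pathfromx} guarantees that $o^w$ is a single, well-defined vertex of $\XX$ for every $w$, so the assertion $w \in L_{o,o}(\XX)$ amounts exactly to saying that this vertex equals $o$.

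First I would prove, by induction on the length $|w|$, the identity $o^w = K\,\psi(w)$ in $\VV = K \backslash G$. For $w = \epsilon$ we have $o^\epsilon = o = K = K\,\psi(\epsilon)$, since $\psi$ is a monoid homomorphism and hence $\psi(\epsilon) = 1_G$. For the inductive step write $w = w'a$ with $w' \in \Si^*$ and $a \in \Si$; then $o^w = (o^{w'})^a$, and by the induction hypothesis $o^{w'} = K\,\psi(w')$. By the definition of the edge set $\EE$, the unique edge labelled $a$ issuing from a coset $Kg$ terminates at $Kg\,\psi(a)$; applying this with $g$ chosen so that $Kg = K\,\psi(w')$ yields $(o^{w'})^a = K\,\psi(w')\,\psi(a) = K\,\psi(w'a) = K\,\psi(w)$, as required.

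Granting this identity, the equality $L(G,K,\psi) = L_{o,o}(\XX)$ falls out immediately: $w \in L_{o,o}(\XX)$ means precisely that the path reading $w$ from $o$ returns to $o$, i.e. $o^w = o$; by the identity just proved this is equivalent to $K\,\psi(w) = K$, which in turn holds if and only if $\psi(w) \in K$, i.e. $w \in L(G,K,\psi)$. There is no genuine obstacle here: the entire content is the coset bookkeeping in the inductive step, which is why the statement can be recorded as obvious. The only point one must not skip is the appeal to full determinism of $\XX$, which is what makes "the label of a closed path at $o$" correspond to an unambiguous endpoint computation.
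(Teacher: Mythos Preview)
Your argument is correct and is exactly the unpacking of what the paper declares ``obvious'' without further proof: tracking the endpoint $o^w = K\psi(w)$ via the edge definition of the Schreier graph and then reading off the equivalence $o^w = o \iff \psi(w) \in K$. There is nothing to add; the paper simply omits the details you have supplied.
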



A context-free grammar  $\Ccal = (\V,\Si,\Pb,S)$ and the language $L(\Ccal)$
are called \emph{linear,} if every production rule in $\Pb$ is of the form 
$T \vdash v_1Uv_2$ or $T \vdash v$, where $v,v_1,v_2 \in \Si^*$ and 
$T, U \in \V$. If furthermore in this
situation one always has $v_2 = \ep$ (the empty word), then grammar and
language are called \emph{right linear} or \emph{regular.}

A \emph{finite automaton} $\Acal$ consits of a finite directed graph
$\XX = (\VV, \EE, \ell)$ with label set $\Si$ and labelling map $\ell$,
together with a root vertex $o$ and a nonempty set $\FF \subset \VV$.
The vertices of $\XX$ are called the \emph{states} of $\Acal$, the root $o$ 
is the \emph{initial} state, and the elements of $\FF$ are the \emph{final} 
states.
The automaton is called \emph{(fully) deterministic} provided the labelled 
graph $\XX$ is (fully) deterministic. 
The language \emph{accepted} by $\Acal$ is
$$
L(\Acal) = \bigcup_{x \in \FF} L_{o,x}(\XX)\,.
$$
If $\Acal$ is deterministic, then for each $w \in L(\Acal)$ there is a 
unique path $\pi \in \bigcup_{x \in \FF} \pi_{o,x}(\XX)$ such
that $\ell(\pi) = w$. A state $y \in \VV$ is called \emph{useful} if there 
is some word $w \in L$ such that the vertex $y$ lies on a path 
in $\bigcup_{x \in \FF} \pi_{o,x}(\XX)$ with label $w$. 
It is clear that we can remove all useless states and their ingoing 
and outgoing edges to obtain an automaton which accepts the same 
language and is \emph{reduced:} it has only useful states. 
%
%
%
%
%
%
%
It is well known \cite[Chapter 2]{Ha} that a language $L \subseteq \Si^*$ 
is regular if and only if $L$ is accepted by
some deterministic finite automaton.

The following generalizes Anisimov's \cite{An} characteriziation of groups
with regular word problem, and also simplifies its proof, as well as the 
simpler one of \cite[Lemma 1]{MS1}.

\begin{pro}\label{pro:regular} 
Let $G$ be a finitely generated group, $K$ a subgroup and 
$\psi : \Sigma \to G$ a semigroup presentation of $G$. 
Then $(G,K)$ has regular word problem with respect to $\psi$ if and only if 
$K$ has finite index in $G$. 
\end{pro}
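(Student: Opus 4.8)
The plan is to exploit Lemma~\ref{lem:cfpaths}, which identifies $L(G,K,\psi)$ with the language $L_{o,o}(\XX)$ of closed-path labels in the Schreier graph $\XX = \XX(G,K,\psi)$, and then to use the automata-theoretic characterization of regular languages: $L$ is regular iff it is accepted by some deterministic finite automaton.

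\emph{The easy direction} ($[G:K]<\infty \implies$ regular). If $K$ has finite index, then the vertex set $\VV = K\backslash G$ of the Schreier graph is finite. Turn $\XX$ into a finite automaton $\Acal$ by declaring $o = K$ to be both the initial state and the only final state, i.e. $\FF = \{o\}$. Since $\XX$ is fully deterministic (as noted right after Definition~\ref{def:schreier}), $\Acal$ is a deterministic finite automaton, and by construction $L(\Acal) = L_{o,o}(\XX) = L(G,K,\psi)$. Hence the word problem is regular.

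\emph{The harder direction} ($L(G,K,\psi)$ regular $\implies [G:K]<\infty$). Suppose $L = L(G,K,\psi)$ is regular, and fix a reduced deterministic finite automaton $\Acal$ with states $Q$, initial state $q_0$, final states $\FF$, and transition structure given by the labelled graph. For a word $w$ over $\Si$ for which the run from $q_0$ is defined, write $q_0\cdot w \in Q$ for the resulting state. The key observation is that the map $Kg \mapsto q_0\cdot w$, where $w$ is any word with $\psi(w) = g$, is \emph{well defined}: indeed the Schreier graph is fully deterministic and strongly connected, so every coset $Kg$ is reached from $o$ by some word $w$; and if $\psi(w_1) = \psi(w_2) = g$ then for every word $u$ we have $w_1 u \in L \iff w_2 u \in L$ (both say $g\,\psi(u) \in K$), so $w_1$ and $w_2$ are Myhill–Nerode equivalent and thus $q_0\cdot w_1 = q_0 \cdot w_2$ in the reduced automaton. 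This gives a well-defined surjection (surjectivity from strong connectedness and reducedness) from $K\backslash G$ onto the finite set $Q$. One then checks that this map is also \emph{injective}: if $Kg_1$ and $Kg_2$ map to the same state, then for every word $u$, $g_1\psi(u)\in K \iff g_2\psi(u)\in K$; taking $u$ with $\psi(u) = g_1^{-1}$ (possible since $\Si$ generates $G$ as a semigroup) shows $1_G \in K \iff g_2 g_1^{-1} \in K$, hence $Kg_1 = Kg_2$. Therefore $|K\backslash G| \le |Q| < \infty$, i.e. $[G:K] < \infty$.

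\emph{Main obstacle.} The only subtle point is the well-definedness of the coset-to-state map, which is precisely where full determinism (so that each coset has a well-defined "target state" independent of the path chosen) and the Myhill–Nerode congruence (so that the target state depends only on $\psi(w)$, not on $w$) must be combined; once that is set up, injectivity and surjectivity are short group-theoretic checks using that $\psi(\Si)$ generates $G$ as a semigroup. Compared with Anisimov's original argument, phrasing it through the Schreier graph and a reduced deterministic automaton is what streamlines the proof.
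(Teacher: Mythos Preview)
Your argument is correct in substance, but note one terminological slip: the paper defines a \emph{reduced} automaton merely as one with only useful states, whereas your well-definedness step (``$w_1$ and $w_2$ are Myhill--Nerode equivalent and thus $q_0\cdot w_1 = q_0\cdot w_2$'') requires the \emph{minimal} DFA for $L$. In a reduced but non-minimal DFA, two Nerode-equivalent words can reach distinct states, and your map would fail to be well defined. Simply take $\Acal$ to be the minimal DFA and the argument goes through; you might also note (via the $w\bar w \in L$ trick, or by taking the complete minimal DFA) that $q_0\cdot w$ is actually defined for every $w$ with $\psi(w)\in G$, which you use implicitly.

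The route is genuinely different from the paper's. The paper works with any reduced DFA: for each state $y$ it fixes a word $w_y$ labelling a path from $y$ to a final state, sets $g_y=\psi(w_y)^{-1}$, and shows that every $g\in G$ lies in some $Kg_y$ (using that $w\bar w\in L$ forces the run on $w$ to exist and land at some state $y$, whence $ww_y\in L$). So the paper produces finitely many cosets that \emph{cover} $G$, while you produce an \emph{injection} of $K\backslash G$ into the state set via the Myhill--Nerode congruence. Your approach is more conceptual and yields the sharper conclusion that $K\backslash G$ is in bijection with the states of the minimal automaton; the paper's approach is more elementary in that it avoids invoking Myhill--Nerode and works with any reduced DFA.
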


\begin{proof} Suppose first that the index of $K$ in $G$ is finite. 
Consider the finite automaton 
$\Acal = (\XX, o,\{o\})$ where $\XX$ is the Schreier graph $\XX(G,K,\psi)$,
and the initial and unique final state is $o=K$ (as a vertex of $\XX$). 
Then $L(G,K,\psi) =L(\Acal)$: indeed, $w \in \Si^*$ belongs to 
$L(G,K,\psi)$, i.e. $\psi(w) \in K$, if and only if $K = K\psi(w)$.  
This shows that $L(G,K,\psi)$ is regular.

Conversely, suppose that $L=L(G,K,\psi)$ is regular and accepted by
the reduced, deterministic finite automaton $\Acal = (\XX, o, \FF)$. 
For $y \in \VV$ there is some word $w \in L$
such that the vertex $y$ lies on the unique path from $o$ to $\FF$ 
with label $w$. 
We choose one such $w$ and let $w_y$ be the label of the final piece 
of the path, starting at $y$ and ending at $\FF$. We set 
$g_y = \psi(w_y)^{-1} \in G$.
   
Let $g \in G$. There are $w, \overline{w} \in \Si^*$ with $\psi(w)=g$
and $\psi(\overline{w}) =g^{-1}$. Thus, $w\overline{w} \in L=L(G,K,\psi)$, 
and there is
a (unique) path $\pi$ with label $w\overline{w}$ from $o$ to some final state.
Now consider the initial piece $\pi_w$ of $\pi$, that is, the path starting at 
$o$ whose label is our $w$ that we started with. 
[Thus, we have proved that such a path $\pi_w$ must exist in $\XX$~!] 
Let $y$ be the
final state (vertex) of $\pi_w$. Then clearly $ww_y \in L(\Acal)$, which 
means that $g g_y^{-1} = \psi(w w_y) \in K$. Since $\psi(\Si^*) = G$, it
follows that
$$
G = \bigcup_{y \in \VV} Kg_y\,,
$$
and $K$ has finitely many cosets in $G$.

\end{proof}

\begin{cor}\label{cor:regular} Let $G$ be finitely generated and $K$ a subgroup.
Then the property of the pair $(G,K)$ to
have a regular word problem is independent of the semigroup presentation of $G$.
\end{cor}

We shall see that the same also holds in the context-free case.
Another corollary that we see from the proof of Proposition \ref{pro:regular} 
is the following.

\begin{cor}\label{cor:automaton} Let $G$ be finitely generated and
$K$ a subgroup with finite index. Then for any semigroup presentation
$\psi: \Si \to G$, any reduced deterministic
automaton $\Acal= (\XX, o, \FF)$ that accepts $L(G,K,\psi)$ has a
surjective homomorphism (as a labelled oriented graph with root $o$) onto the
Schreier graph $\XX(G,K,\psi)$. 
Also, the labelled graph $\XX$ is fully deterministic.

\end{cor}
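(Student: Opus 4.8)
The plan is to build the desired homomorphism $\varphi$ directly on vertices and then verify that it respects roots and labelled edges and is onto. Since $\Acal$ is reduced, every state $y \in \VV$ is useful, hence reachable from $o$ by at least one path; I would choose such a path, let $w$ be its label, and set $\varphi(y) = K\psi(w) \in K\backslash G$, a vertex of the Schreier graph. The root $o$ of $\Acal$ then maps to $K\psi(\epsilon) = K = o$, the root of $\XX(G,K,\psi)$, as required. The one genuinely non-trivial point, and the main obstacle, is that $\varphi(y)$ must not depend on the chosen path from $o$ to $y$.

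For well-definedness I would exploit usefulness together with the group structure of $K$, mirroring the argument in the proof of Proposition \ref{pro:regular}. Because $y$ is useful, it lies on some accepting path, which splits as a piece from $o$ to $y$ followed by a piece from $y$ to a final state, say with label $w_2$. If now $w$ and $w'$ both label paths from $o$ to $y$, then concatenating each with $w_2$ yields paths from $o$ to a final state, so $ww_2,\, w'w_2 \in L(\Acal) = L(G,K,\psi)$, i.e. $\psi(w)\psi(w_2) \in K$ and $\psi(w')\psi(w_2) \in K$. Since $K$ is a subgroup, multiplying the first element by the inverse of the second gives $\psi(w)\psi(w')^{-1} \in K$, that is $K\psi(w) = K\psi(w')$. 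Hence $\varphi$ is well defined.

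Checking that $\varphi$ is a labelled-graph homomorphism is then immediate: for an edge $(y,a,z)$ of $\XX$, prefixing a path $o \to y$ of label $w$ with this edge exhibits $wa$ as a label of a path $o \to z$, so $\varphi(z) = K\psi(wa) = K\psi(w)\psi(a) = \varphi(y)\psi(a)$, and $(\varphi(y), a, \varphi(y)\psi(a))$ is by definition an edge of the Schreier graph. For surjectivity I would reuse the device from Proposition \ref{pro:regular}: given $g \in G$, pick $w, \overline{w}$ with $\psi(w) = g$ and $\psi(\overline{w}) = g^{-1}$; then $\psi(w\overline{w}) = 1_G \in K$, so $w\overline{w} \in L = L(\Acal)$, and determinism yields a unique accepting path whose initial piece of label $w$ ends at some state $y$ with $\varphi(y) = K\psi(w) = Kg$. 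Thus every coset is attained.

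Finally, for full determinism it suffices to show $\XX$ is fully labelled, since it is deterministic by hypothesis. Fix a state $y$, with $w$ labelling a path $o \to y$ and $g = \psi(w)$, and fix $a \in \Si$. Choosing $v$ with $\psi(v) = (g\psi(a))^{-1}$ gives $\psi(wav) = 1_G \in K$, so $wav \in L = L(\Acal)$ and there is a unique accepting path $\pi$ with this label. Its initial segment of label $w$ must, by determinism, coincide with the chosen path $o \to y$ and hence end at $y$; the following edge of $\pi$ is then an outgoing $a$-edge at $y$. So every label occurs at every state, and $\XX$ is fully deterministic.
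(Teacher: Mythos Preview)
Your proof is correct and follows essentially the same approach as the paper's. The only cosmetic difference is that the paper defines the map via the \emph{terminal} piece of an accepting path through $y$ (setting $\kappa(y)=K\psi(w_y)^{-1}$ with $w_y$ a label of a path from $y$ to $\FF$), whereas you use the \emph{initial} piece (setting $\varphi(y)=K\psi(w)$ with $w\in L_{o,y}$); the paper itself observes these coincide, and the well-definedness, surjectivity, homomorphism, and full-determinism verifications are the same in both.
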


\begin{proof} Let $\Acal = (\XX, o, \FF)$ be deterministic and
reduced, as in part 2 of the proof of Proposition \ref{pro:regular}.

Let $y \in \VV$, and recall the construction of the label $w_y$ of a
path from $y$ to $\FF$, and $g_y = \psi(w_y)^{-1} \in G$.
If $v$ is another path from $y$ to $\FF$, and $h = \psi(v)^{-1}$,
then we can take $w \in L_{o,y}$ (which we know to be non-empty) and
find that $ww_y, wv \in L(G,K,\psi)$, so that $\psi(w) \in Kg_y \cap Kh$.
Thus $Kg_y = K\psi(w) = Kh$, and the map $\kappa : \VV \to K \backslash G,\;$ 
$y \mapsto Kg_y$ is well defined. 
It has the property that when $w \in L_{o,y}$, then $K\psi(w)=Kg_y$.
The map $\kappa$ is clearly surjective, and $\kappa(o)=K$ by construction.

Now let $y \in \VV$ and $a \in \Si$. Take $w \in L_{o,y}$ and consider the
word $wa$. Again by part 2 of the proof of Proposition \ref{pro:regular}, 
there is a unique path 
$\pi_{wa}$ in $\XX$ starting at $o$ with label $wa$. If $y$ is its final
vertex, then there is the edge $e = (y,a,z)$ in $\XX$. 
In this situation, $\kappa(z) = K\psi(wa)= Kg_y\psi(a) = \kappa(y)\psi(a)$.
This means that in the Schreier graph, there is the edge with label $a$ from
$\kappa(y)$ to $\kappa(z)$. Therefore $\kappa$ is a homomorphism of labelled graphs.
\end{proof}

The following simple example shows that, in general, the map $\kappa$ 
constructed in the proof of the previous corollary is not injective. 

Let $G = \mathbb Z_2$ = \{1,t\} be the group of order two and $K = \{1\}$ the 
trivial subgroup.
Let $\Si =\{a\}$ and consider the presentation 
$\psi \colon \Si \to G$ such that $\psi(a) = t$.
Then $L(G,K,\psi) = \{a^{2n}: n \geq 0 \}$. 

\begin{center}
\begin{picture}(380,160)
\put(20,70){\circle{28}}
\put(80,70){\circle{28}}
\put(120,70){\circle{28}}
\put(180,70){\circle{28}}
\put(220,70){\circle{28}}
\put(280,110){\circle{28}}
\put(280,30){\circle{28}}
\put(340,70){\circle{28}}
\put(16,67){$1$}
\put(78,68){$t$}
\put(116,68){$o$}
\put(216,68){$o$}
\put(336,68){$f$}
\qbezier[100](30,80)(50,90)(70,80) 
\qbezier[100](30,60)(50,50)(70,60)
\qbezier[100](130,80)(150,90)(170,80) 
\qbezier[100](130,60)(150,50)(170,60)

\put(52,85){\vector(1,0){1}}
\put(48,55){\vector(-1,0){1}}
\put(152,85){\vector(1,0){1}}
\put(148,55){\vector(-1,0){1}}

\put(47,90){$a$}
\put(47,45){$a$}
\put(147,90){$a$}
\put(147,45){$a$}

\put(312,94){$a$}
\put(312,42){$a$}
\put(242,94){$a$}
\put(242,42){$a$}
\put(232,78){\line(4,3){35}}
\put(293,36){\line(4,3){35}}
\put(232,78){\vector(4,3){20}}
\put(293,36){\vector(4,3){20}}

\put(231,62){\line(4,-3){36}}
\put(293,105){\line(4,-3){35}}
\put(231,62){\vector(4,-3){20}}
\put(293,105){\vector(4,-3){20}}
\end{picture}
\end{center}
\begin{center}\vspace{-.5cm}
\emph{Figure 1.}
\end{center}
\vspace{.5cm}
In Figure 1 above we have represented, in order, the Schreier graph 
$X(G, K, \psi)$ (which is nothing but the Cayley graph of $G$ w.r. to $\psi$), 
and two automata $\Acal_1$ and $\Acal_2$. As usual $o$ denotes the
origin, while the sets of final states  are $F_1 =\{o\}$ and 
$F_2 = \{o,f\}$, respectively. We have
$L(\Acal_1) = L(\Acal_2) = L(G,K,\psi)$.

\section{Pushdown automata}\label{sec:pushdown}

Besides grammars, we shall need another instrument for generating context-free
languages. A \emph{pushdown automaton} is a $7$-tuple 
$\Acal = (\Q,\Si,\Z, \de, q_0, \Q_f, z_0)$, where $\Q$ is a finite set of \emph{states,}
$\Si$ the input alphabet as above, $\Z$ a finite set of \emph{stack symbols,}
$q_0 \in Q$ the initial state,
$\Q_f \subset \Q$ the set of final states, and $z_0 \in Z$ is the \emph{start
symbol.} Finally, the function
$\de: \Q \times (\Si \cup \{ \epsilon \}) \times (\Z\cup \{ \epsilon \}) 
\to \Pcal_{\text{\rm fin}}(\Q \times \Z^*)$
is the \emph{transition function.} Here, 
$\Pcal_{\text{\rm fin}}(\Q \times \Z^*)$ stands for the
collection of all finite subsets of $\Q \times \Z^*$.

The autmaton works in the following way. At any time, it is in some state 
$p \in \Q$, and the stack contains a word $\zeta \in \Z^*$. The automaton 
reads a word $w \in \Si^*$ from the ``input tape'' letter by letter from 
left to right.
If the current letter of $w$ is $a$, the state is $p$ and the top (=rightmost) 
symbol of the stack word $\zeta$ is $z$, it performs one of the following
transitions.    

\smallskip

(i) $\;\Acal$ selects some $(q,\zeta') \in \de(p,a,z)$, changes into state $q$, 
moves to the next position on the input tape (it may 
be empty if $a$ was the last letter of $w$), and replaces the rightmost 
symbol $z$ of $\zeta$ by $\zeta'$, or

(ii) $\;\Acal$ selects some $(q,\zeta') \in \de(p,\epsilon,z)$, changes into state $q$, 
remains at the current position on the input tape (so that
$a$ has to be treated later), and replaces the rightmost symbol $z$ of $\zeta$
by $\zeta'$. 

If both $\de(p,a,z)$ and $\de(p,\epsilon,z)$ are empty then $\Acal$ halts.

\smallskip

The automaton is also allowed to continue to work when the stack is 
empty, i.e., when $\zeta = \epsilon$. Then the automaton acts
in the same way, by putting $\zeta'$ in the stack when it has selected
$(q,\zeta') \in \de(p,a,\epsilon)$ in case (i), resp. 
$(q,\zeta') \in \de(p,\epsilon,\epsilon)$ in case (ii). 

We say that $\Acal$ \emph{accepts} a word $w \in \Si^*$ if starting at the
state $q_0$ with only $z_0$ in the stack and with $w$ on the input tape,
after finitely many transitions the automaton can reach a final state with 
empty stack and empty input tape. The language accepted by $\Acal$ is denoted
$L(\Acal)$.

The pushdown automaton is called \emph{deterministic} if for any 
$p \in \Q$, $a \in \Si$ and  $z \in \Z \cup \{\epsilon\}$, it has at most 
one option what to do next, that is,
$$
|\de(p,a,z)| + |\de(p,\epsilon,z)| \le 1\,.
$$
(Here, $|\,\cdot\,|$ denotes cardinality.)

It is well known \cite{Ha} that a language is context-free if and
only if it is accepted by some pushdown automaton. A context-free language is
called \emph{deterministic} if it is accepted by a deterministic pushdown
automaton. We also remark here 
that a deterministic context-free language $L$  is \emph{un-ambiguous}, which 
means that it is generated by some context-free grammar in which every word of 
$L$ has precisely one rightmost derivation.

The following lemma is modelled after the indications of
\cite[Lemma 2]{MS1}. For the sake of completeness, we include the full proof.

\begin{lem}\label{lem:translator}
Suppose that $G, K, \Si$ and $\psi:\Si \to G$ are as above.
Let $H$ be a finitely generated subgroup of $G$, and let $\Si'$ be another 
alphabet and $\psi':\Si' \to H$ be such 
that $F' = \psi'(\Si')$ generates $H$ as a semigroup.

Then, if $L(G,K,\psi)$ is context-free, also $L(H,K\cap H,\psi')$ is 
context-free, and if in addition $L(G,K,\psi)$ is deterministic, then so is 
$L(H,K\cap H,\psi')$.
\end{lem}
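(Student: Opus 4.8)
The plan is to build a pushdown automaton (or ``translator'') for $L(H, K\cap H, \psi')$ out of a given one for $L(G,K,\psi)$, preserving determinism. First I would fix a pushdown automaton $\Acal$ accepting $L(G,K,\psi)$ (deterministic, if we are in that case). The key observation is that membership in $K\cap H$ of $\psi'(w')$, for $w' \in (\Si')^*$, is the same as membership in $K$ of $\psi'(w')$, \emph{provided} we already know $\psi'(w')\in H$ --- but since $\psi'$ maps into $H$, the latter is automatic, so $L(H,K\cap H,\psi') = \{ w' \in (\Si')^* : \psi'(w')\in K\}$. Thus we only need to translate words over $\Si'$ into words over $\Si$ representing the same group element and feed them to $\Acal$.

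The mechanism: for each letter $b \in \Si'$, choose once and for all a word $\tau(b) \in \Si^*$ with $\psi(\tau(b)) = \psi'(b)$; this is possible since $\psi(\Si)$ generates $G$ as a semigroup, hence $\psi(\Si^*)=G \supseteq H$. Extend $\tau$ to a monoid homomorphism $\tau:(\Si')^* \to \Si^*$, so that $\psi \circ \tau = \psi'$. Then $w' \in L(H,K\cap H,\psi')$ iff $\tau(w') \in L(G,K,\psi)$. Now I would construct a new pushdown automaton $\Acal'$ with input alphabet $\Si'$ that, on reading a letter $b$, simulates the run of $\Acal$ on the finite block $\tau(b)$ using $\ep$-moves on its own input tape (it reads $b$ once, then performs the finitely many steps of $\Acal$ on $\tau(b)$ internally, remembering the unread suffix of $\tau(b)$ in its finite-state control). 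At the end, after the input is exhausted, $\Acal'$ accepts iff $\Acal$ would have accepted $\tau(w')$. The state set of $\Acal'$ is $\Q \times (\text{suffixes of the }\tau(b)\text{'s})$, which is finite; the stack alphabet is unchanged. One must check that if $\Acal$ is deterministic then $\Acal'$ is too: at each moment $\Acal'$ either is mid-block (in which case its next move is the unique next move of $\Acal$, an $\ep$-move on $\Acal'$'s tape) or is between blocks (in which case it must read the next input letter $b$, deterministically, and there is no competing $\ep$-move); this is where the bookkeeping has to be set up carefully so that no state of $\Acal'$ simultaneously admits an $\ep$-transition and an input-reading transition.

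The main obstacle I expect is the determinism preservation, specifically two nuisances. First, $\Acal$ may have its own $\ep$-moves that do not consume input but do manipulate the stack; these must be interleaved correctly with the block-simulation, and a block boundary in $\Acal'$ must only allow reading the next $\Si'$-letter once $\Acal$ has no pending $\ep$-move available --- so ``between blocks'' should really mean ``$\Acal$ has consumed all of $\tau(b)$ \emph{and} has no applicable $\ep$-transition,'' which requires encoding a bit more of $\Acal$'s behaviour into $\Acal'$'s control. Second, the empty word $\epsilon \in \Si'$: since $\epsilon \in L(H,K\cap H,\psi')$ always (as $1_G \in K$), one checks $\tau(\epsilon)=\epsilon \in L(G,K,\psi)$, so the initial/final state setup of $\Acal'$ handles this automatically. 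A cleaner alternative to side-step some of this, which I might use instead, is to invoke the general closure fact that context-free (resp.\ deterministic context-free) languages are closed under inverse homomorphism: $L(H,K\cap H,\psi') = \tau^{-1}\bigl(L(G,K,\psi)\bigr)$, and $\tau$ is a (length-non-decreasing, in fact $\epsilon$-free if all $\tau(b)\neq\epsilon$) homomorphism, so the result follows from the corresponding standard theorem in \cite{Ha}; but since the paper says it wants the full proof ``for the sake of completeness,'' I would present the explicit automaton construction above and verify the two determinism points by hand.
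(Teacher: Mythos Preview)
Your proposal is correct and follows essentially the same route as the paper: choose for each $b\in\Si'$ a word $u(b)\in\Si^*$ with $\psi(u(b))=\psi'(b)$, observe that $L(H,K\cap H,\psi')=\tau^{-1}(L(G,K,\psi))$ for the induced homomorphism, and build $\Acal'$ by storing the unread suffix of $u(b)$ in the finite control while simulating $\Acal$. Your worry about interleaving $\Acal$'s own $\ep$-moves with the block simulation is legitimate but turns out not to require any extra bookkeeping: since determinism of $\Acal$ means $|\de(p,a,z)|+|\de(p,\epsilon,z)|\le 1$ for every $(p,a,z)$, the union of ``do an $\ep$-move of $\Acal$'' and ``consume the next letter of the block'' is automatically a singleton, which is exactly how the paper's explicit transition table handles it.
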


\begin{proof} We start with a pushdown automaton 
$\Acal = (\Q,\Si,\Z, \de, q_0, \Q_f, z_0)$ that accepts $L(G,K,\psi)$.

For each $b \in \Si'$, there is $u(b) \in \Si^*$ such that 
$\psi'(b)=\psi\bigl(u(b)\bigr)$, and we may 
choose $u(b)$ to have length $\ge 1$. 
Thus, 
$$
w' = b_1 \cdots b_n \in L(H,K\cap H,\psi') \iff u(b_1) \cdots u(b_n) \in
L(G,K,\psi)\,.
$$
With this in mind, we modify $\Acal$ in order to obtain a pushdown
automaton $\Acal'$ that accepts $L(H,K\cap H,\psi')$. Our $\Acal'$ has to 
translate any $w' = b_1 \cdots b_n \in (\Si')^*$ into 
$w=u(b_1) \cdots u(b_n) \in \Si^*$
and to use $\Acal$ in order to check whether $w \in L(G,K,\psi)$.

Let $m+1 = \max \{ |u(b)| : b \in \Si' \}$. If $m=0$ then the only modification
of $\Acal$ needed is to replace $\Si$ by its subset $\Si'$ and to use the
resulting restriction of the transition function.

Otherwise, we set $\Si_m = \Si \cup \Si^2 \cup \cdots \cup \Si^m$.
For $v \in \Si^+ = \Si^* \setminus \{ \epsilon\}$, we denote by
$v_+$ its subword obtained by deleting the first letter.
We define  $\Q' = \Q \cup (\Q \times \Si_m)$ and
$\Acal' = (\Q',\Si',\Z, \de', q_0, \Q_f, z_0)$ with the
transition function $\de'$ as follows. For each $p \in \Q$ and $z \in \Z$,
$$
\begin{aligned}
\de'(p,\epsilon,z) &= \de(p,\epsilon,z) \,,\\ 
\de'(p,b,z) &= \de\bigl(p,a,z\bigr) \,,\quad\text{if}\; u(b) = a \in \Si\,,\\
\de'(p,b,z) &= 
\Bigl\{ \bigl((q,u(b)_+), \zeta\bigr) : (q,\zeta) \in \de(p,a,z)\Bigr\} \,,
\qquad\quad\text{if}\; u(b) \in  a\Si^+\,,\\
\de'\bigl( (p,v),\epsilon,z\bigr) &=
\Bigl\{ \bigl((q,v), \zeta\bigr) : 
    (q,\zeta) \in \de(p,\epsilon,z)\Bigr\}\\ 
&\qquad\cup\; \Bigl\{ \bigl((q,v_+), \zeta\bigr) : 
    (q,\zeta) \in \de(p,a,z)\Bigr\}\,,
\qquad\quad\;\;\text{if}\; v \in a\Si^+\,,\\
\de'\bigl( (p,a),\epsilon,z\bigr) &=
\Bigl\{ \bigl((q,a), \zeta\bigr) : (q,\zeta) \in \de(p,\epsilon,z)\Bigr\} 
\;\cup\; \de(p,a,z)\,,
\qquad\text{if}\; a\in \Si\,.
\end{aligned}
$$
Thus, the new states of the form $(p,v)$ with $1 \le |v| < m$
serve to remember the terminal parts $v$ of the words $u(b)$, $b \in \Si'$.
This automaton accepts $L(G,K,\psi')$, and it is deterministic,
if $\Acal$ has this property.
\end{proof}

\begin{cor}\label{cor:indep} Being context-free is a property of the
pair $(G,K)$ that does not depend on the specific choice of 
the alphabet $\Si$ and the map $\psi: \Si \to G$ for which $\psi(\Si)$
generates $G$ as a semigroup. 
\end{cor}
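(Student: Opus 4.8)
The plan is to read off the corollary directly from Lemma \ref{lem:translator}, applied in the degenerate case $H = G$. Fix two semigroup presentations $\psi_1 : \Si_1 \to G$ and $\psi_2 : \Si_2 \to G$ of the same group $G$, i.e., both $\psi_1(\Si_1)$ and $\psi_2(\Si_2)$ generate $G$ as a semigroup. Assume that $L(G,K,\psi_1)$ is context-free; the goal is to show that $L(G,K,\psi_2)$ is context-free as well.

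To do this, I would invoke Lemma \ref{lem:translator} with its data $(G,K,\Si,\psi)$ instantiated as $(G,K,\Si_1,\psi_1)$, and with the ``inner'' data $(H,\Si',\psi')$ instantiated as $(G,\Si_2,\psi_2)$. The hypotheses are trivially met: $H = G$ is a finitely generated subgroup of $G$, $\Si_2$ is an alphabet, and $\psi_2(\Si_2)$ generates $H = G$ as a semigroup by assumption. Since $K \cap H = K \cap G = K$, the conclusion of the lemma is precisely that $L(G,K,\psi_2)$ is context-free. Exchanging the roles of $\psi_1$ and $\psi_2$ gives the reverse implication, so context-freeness with respect to $\psi_1$ is equivalent to context-freeness with respect to $\psi_2$; as the two presentations were arbitrary, this is exactly the asserted independence of the choice of $\Si$ and $\psi$. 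Moreover, the ``in addition'' clause of Lemma \ref{lem:translator} shows in the same way that \emph{deterministic} context-freeness is also independent of the presentation, which one may wish to record as well.

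I do not expect any genuine obstacle here: the corollary is an immediate specialization of the lemma, and the only point worth flagging is the (harmless) degenerate instance $H = G$, together with the trivial identity $K \cap G = K$. All the real work — the explicit construction of the translating pushdown automaton $\Acal'$ and the verification that it preserves determinism — has already been carried out in the proof of Lemma \ref{lem:translator}.
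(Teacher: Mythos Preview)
Your proposal is correct and matches the paper's own treatment: the corollary is stated immediately after Lemma \ref{lem:translator} without a separate proof, precisely because it is the specialization $H=G$, $K\cap H=K$ that you spell out. Your additional remark about determinism being preserved is also in line with how the paper later uses this fact.
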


Therefore, it is justified to refer to
the contxt-free pair $(G,K)$ rather than to the triple $(G,K,\psi)$.
Furthermore, whenever this is useful, we may restrict
attention to the case when the graph $\XX(G,K,\psi)$ is symmetric: we say that
\emph{$\psi$ is symmetric,} if there is a proper involution 
$a \mapsto a^{-1}$ of $\Si$ such that $\psi(a^{-1}) = \psi(a)^{-1}$
in $G$. (Again, it is not necessary to assume that $\psi$ is one-to-one,
so that we have that $a^{-1} \ne a$ even when $\psi(a)^2 = 1_G$.)

\begin{pro}\label{pro:extend} Let $G$ be finitely generated, $H$ be 
a subgroup with $[G:H] < \infty$. If $K$ is a subgroup of $H$ then $(G,K)$ is 
context-free if and only if $(H,K)$ is context-free.
\end{pro}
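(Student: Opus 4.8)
The plan is to prove the two implications separately. The forward implication falls straight out of Lemma \ref{lem:translator}, while the converse is the real content and calls for a small finite-state transduction. \emph{Only if.} Suppose $(G,K)$ is context-free. Since $[G:H]<\infty$ and $G$ is finitely generated, $H$ is finitely generated (Reidemeister--Schreier). Because $K\le H$ we have $K\cap H=K$, so Lemma \ref{lem:translator}, applied to the finitely generated subgroup $H$, shows at once that $L(H,K,\psi')$ is context-free for any semigroup presentation $\psi':\Si'\to H$; that is, $(H,K)$ is context-free. (The same lemma also shows that determinism is inherited.)

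\emph{If.} Suppose $(H,K)$ is context-free. By Corollary \ref{cor:indep} it suffices to exhibit \emph{one} semigroup presentation of $G$ with context-free word problem, and we are free to choose the presentations of $H$ and $G$ at will. Fix any $\psi:\Si\to G$, and fix $\psi':\Si'\to H$ (which exists since $H$, being finitely generated, is generated as a semigroup by a finite set). Choose right coset representatives $T=\{t_1=1_G,t_2,\dots,t_n\}$, so that $G=\bigsqcup_{i=1}^n Ht_i$. For $i\in\{1,\dots,n\}$ and $a\in\Si$ let $j(i,a)$ be the unique index with $Ht_i\psi(a)=Ht_{j(i,a)}$; then $h(i,a):=t_i\,\psi(a)\,t_{j(i,a)}^{-1}\in H$, and we fix once and for all a word $u(i,a)\in(\Si')^*$ with $\psi'\bigl(u(i,a)\bigr)=h(i,a)$. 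For $w=a_1\cdots a_m\in\Si^*$ set $i_0=1$ and $i_k=j(i_{k-1},a_k)$; a straightforward induction on $k$ then gives
\[
\psi(a_1\cdots a_k)=\psi'\bigl(u(i_0,a_1)\,u(i_1,a_2)\cdots u(i_{k-1},a_k)\bigr)\,t_{i_k}\,.
\]
Since $K\le H=Ht_1$, this shows that $\psi(w)\in K$ if and only if $i_m=1$ \emph{and} $u(i_0,a_1)\cdots u(i_{m-1},a_m)\in L(H,K,\psi')$.

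Now the map $\tau\colon\Si^*\to(\Si')^*$, $w\mapsto u(i_0,a_1)\cdots u(i_{m-1},a_m)$, is computed by a deterministic finite transducer (a generalized sequential machine) with state set $\{1,\dots,n\}$ which, in state $i$ reading $a$, emits $u(i,a)$ and passes to state $j(i,a)$; and the condition ``$i_m=1$'' defines a regular language $R\subseteq\Si^*$. Hence
\[
L(G,K,\psi)=\tau^{-1}\bigl(L(H,K,\psi')\bigr)\cap R\,,
\]
which is context-free because the context-free languages are closed under inverse gsm mappings and under intersection with regular languages \cite{Ha}. Alternatively one can argue concretely in the style of Lemma \ref{lem:translator}: take a pushdown automaton accepting $L(H,K,\psi')$ and let its finite control additionally remember the current coset index and the not-yet-consumed suffix of the current block $u(i,a)$, accepting only when the index equals $1$; this version also makes clear that determinism is preserved.

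The main obstacle is not anything deep but the bookkeeping behind the displayed identity: the crucial point is that on reading one letter $a$ while the current coset is $Ht_i$, the ``$H$-component'' of $\psi(w)$ gets right-multiplied by a factor $h(i,a)$ depending only on the pair $(i,a)$ — this finiteness is exactly what lets $\tau$ be realized by a finite transducer. One should also keep in mind that the finite-index hypothesis is used twice: it makes $H$ finitely generated (needed both for Lemma \ref{lem:translator} in the first part and for the existence of $\psi'$ in the second), and it makes the representative set $T$, hence the transducer, finite.
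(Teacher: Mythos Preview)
Your proof is correct and follows essentially the same approach as the paper: the ``only if'' direction is Lemma~\ref{lem:translator}, and for the converse you use the Schreier-type coset rewriting $t_i\psi(a)=h(i,a)\,t_{j(i,a)}$ to reduce membership in $L(G,K,\psi)$ to a finite-state coset check plus membership of the rewritten word in $L(H,K,\psi')$, exactly as in the paper. The only cosmetic difference is that you package the conclusion via closure of context-free languages under inverse gsm mappings and intersection with regular sets, whereas the paper sketches the corresponding pushdown automaton directly (and leaves the details to the reader); you even mention that alternative yourself.
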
 

\begin{proof} The ``only if'' is contained in Lemma \ref{lem:translator}.
(Observe that $H$ inherits finite generation from $G$, since $[G:H] < \infty$.)

For the converse, we assume that $(H,K)$ is context-free and 
let $\psi: \Si \to H$ and $\psi': \Si' \to G$ be semigroup presentations 
of $H$ and $G$, respectively. There is a pushdown automaton 
$\Acal = (\Q,\Si,\Z, \de, q_0, \Q_f, z_0)$ that accepts $L(H,K,\psi)$.

Let $F$ be a set of representatives of the right cosets of $H$ in $G$,
with $1_G \in F$. Thus, $|F| < \infty$, and 
$$
G = \biguplus_{g \in F} Hg\,,
$$
For every $g \in F$ and $b\in \Si'$ there is a unique 
$\bar g = \bar g (g,b) \in F$ such that $g\psi'(b) \in H\bar g$.
Therefore there is a word $u = u(g,b) \in \Si^*$ such that 
$$
g\psi'(b) = \psi\bigl(u(g,b)\bigr) \bar g (g,b)\,.
$$
An input word $w=b_1 \cdots b_n$ is transformed recursively into 
$u_1 \cdots u_n\,$, along with the sequence $g_0, g_1, \dots,g_n$
of elements of $F$ that indicate the current $H$-coset at each step:
$$
g_0 = 1_G\,;\quad u_k = u(g_{k-1},b_k)\AND g_k = \bar g(g_{k-1},b_k)\,.
$$
Then $\psi'(w) \in K$ if and only if $g_n = 1_G$ and
$\psi(u_1 \cdots u_n) \in K$.  

 Thus, our new automaton $\Acal'$ recalls at each step the current coset 
$Hg_{k-1}\,$, which is multiplied on the right by $\psi(b_k)$, where $b_k$ is 
the next input letter.
Then the new coset is $H\bar g(g_{k-1},b_k)$, and  $\Acal'$ simulates what 
$\Acal$ does next upon reading $u(g_{k-1},b_k)$. Then 
$w$ is accepted when at the end the coset is $H = H1_G$ and $\Acal$ is in a 
final state.

The simple task to write down this automaton in detail is left to the reader.
\end{proof}

\section{Context-free graphs}\label{sec:cfgraphs}
In this section, we assume that $(\VV,E,\ell)$ is symmetric. 
We may think of each pair of oppositely oriented edges
$(x,a,y)$ and $(y,a^{-1},x)$ as one non-oriented edge, so that
$\XX$ becomes an ordinary graph with symmetric neighbourhood relation,
but possibly multiple edges and loops.
If it is in addition fully deterministic, then $\XX$ is a 
\emph{regular graph}, that is, the number of outgoing edges
(which coincides with the number of ingoing edges) at each vertex is $|\Si|$. 
Attention: if we consider non-oriented edges, then each loop at $x$ has to
be counted twice, since it corresponds to two oriented edges of the
form $(x,a,x)$ and $(x,a^{-1},x)$.
For all our purposes it is natural to require that $\XX$ is \emph{connected:}
for any pair of vertices $x,y$ there is a path from $x$ to $y$.
The distance $d(x,y)$ is the minimum length (number of edges) of a path from
$x$ to $y$, which defines the integer-valued \emph{graph metric.} A 
\emph{geodesic path} is one whose length is the distance between its
endpoints.

We select a finite, non-empty subset $F$ of $\VV$ and consider the balls 
$B(F,n) = \{x : d(x,F) \le n\}$ (where $d(x,F) = \min \{ d(x,y) : y \in F\}$).
If we delete $B(F,n)$ then
the induced graph $\XX \setminus B(F,n)$ will fall apart into a finite
number of connected components, called \emph{cones} with respect to $F$. 
Each cone is a labelled,
symmetric graph $C$ with the \emph{boundary} $\bd C$ consisting of 
all vertices $x$ in $C$ having a neighbour outside $C$ (i.e., in $B(F,n)$). 

The following notion was introduced in \cite{MS2} for symmetric,  
labelled graphs and $F = \{o\}$.

\begin{dfn}\label{def:cfgraph}
The graph $\XX$ is called context-free with respect to $F$ if there is 
only a finite number of  isomorphism types of the cones with respect to $F$ as 
labelled graphs with boundary.
\end{dfn}

This means that there are finitley many cones $C_1, \dots, C_r$
(generally with respect to different radii $n$) such that for each cone $C$, 
we can fix a bijection $\phi_C$ from (the vertex set of) $C$ to precisely 
one of the $C_i\,$, this bijection sends $\bd C$ to 
$\bd C_i$, and $(x,a,y)$ is an edge with both endpoints in $C$ if and only 
if its image$\bigl(\phi_C(x),a,\phi_C(y) \bigr)$ is an edge of $C_i$. 
In this case, we say that $C$ is a cone of \emph{type} $i$.

Generally, as in \cite{MS2}, we are interested in the case when $F = \{o\}$
(or any other singleton), but there is at least one point where it will be 
useful to admit arbitrary finite, non-empty $F$. 

Another natural notion of context-freeness of $X$ with respect to $o$
is to require that the language $L_{o,o}(X)$ is context-free. We shall see 
that for deterministic, symmetric graphs this is equivalent with
context-freeness with respect to $o$ in the sense of Definition 
\ref{def:cfgraph}. One direction of this equivalence is practically contained 
in \cite{MS2}, but not stated explicitly except for the case of Cayley 
graphs of groups. 
The other direction (that context-freeness of $L_{o,o}$ implies that of
the graph) is shown in \cite{MS2} only for Cayley graphs of groups, which is 
substantially simpler than the general case treated below in Theorem 
\ref{thm:characterize2}.

\begin{thm}\label{thm:characterize1} 
If the symmetric, labelled graph $(\VV,E,\ell)$ with
label alphabet $\Si$ is context-free with respect to the finite, non-empty 
set $F \subset \VV$, then $L_{x,y}$ is a context-free language for all
$x,y \in \VV$. Furthermore, if the graph $\XX$ is deterministic, then so is 
the context-free language $L_{x,y}\,$.   
\end{thm}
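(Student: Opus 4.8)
The plan is to build a context-free grammar that generates $L_{x,y}$ directly from the finite list of cone types, exploiting the self-similarity of the graph. First I would fix the radius $n$ and the ball $B = B(F,n)$, and consider the finite induced subgraph on $B$ together with the boundary data of each cone. For each pair of vertices $u,v \in B$ I introduce a variable $S_{u,v}$ whose intended language is $L_{u,v}$ restricted to paths staying inside $B$ except possibly leaving through a cone and returning; more precisely, for each cone type $i$ and each boundary vertex $p \in \bd C_i$ I introduce a variable $D_{i,p}$ intended to generate the set of labels of closed paths that start at $p$, enter the cone $C_i$, wander inside it, and return to $p$. The key recursive structure is that any path inside a cone of type $i$ either stays within distance-one of the boundary in the original graph or recursively passes through sub-cones — but since removing the next ball inside the cone again produces cones of one of the finitely many types, the $D_{i,p}$ satisfy a finite system of equations referring only to finitely many bounded subgraphs and to each other.

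The production rules would be of the following shape. For the "local" variables $S_{u,v}$ with $u,v \in B$, I enumerate: a direct edge $(u,a,v)$ gives $S_{u,v} \vdash a$; a two-step decomposition through an intermediate vertex $t \in B$ gives $S_{u,v} \vdash S_{u,t} S_{t,v}$; and an excursion into a cone attached at a boundary vertex $p$ (a vertex of $B$ adjacent to a cone $C$ of type $i$ via some edge into $\bd C_i$) gives $S_{u,v} \vdash S_{u,p} \, E_{p \to C} \, S_{p,v}$, where $E_{p\to C}$ is a variable generating labels of closed excursions from $p$ into $C$. The cone-excursion variables then unfold recursively: picking a radius inside the cone, a closed excursion from $p$ into $C_i$ decomposes through the finitely many vertices of that inner ball and the finitely many sub-cones, each sub-cone again of one of the types $C_1,\dots,C_r$, so the variables $D_{i,p}$ close up into a finite self-referential system. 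Since every label of a closed path in the cone can be chopped at the first and last times it touches $\bd C_i$ and the inner ball, one checks that these rules generate exactly $L_{u,v}$ for $u,v$ in the respective subgraphs; then $L_{x,y}$ is obtained by taking $S_{x',y'}$ for the vertices $x',y'$ lying in (or projecting into) $B(\{x,y\},n)$, or more simply by enlarging $F$ to $F\cup\{x,y\}$, which only increases the ball and keeps finitely many cone types.

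For the determinism claim, the point is that when $\XX$ is deterministic, one converts the grammar into a deterministic pushdown automaton rather than arguing at the grammar level. The automaton reads the input word $w$ one letter at a time and, because each vertex has at most one outgoing edge with a given label, it can unambiguously track the current vertex as long as that vertex lies in the fixed ball $B$; when the path descends into a cone it pushes onto the stack a record (cone type, entry boundary vertex, and the state of traversal), and because cones nest with only finitely many types, the finite control plus the stack suffice to follow the unique path with label $w$ deterministically, accepting iff the path ends at $y$. The main obstacle I anticipate is the bookkeeping in the recursive cone decomposition: one must argue carefully that every closed excursion inside a cone, no matter how deep, is captured by finitely many production schemes — i.e. that the relevant "inner balls" and their cones can be chosen coherently so the system of variables is genuinely finite — and that the chopping of an arbitrary path at its boundary-contact times yields a valid parse. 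Handling loops and multiple edges (explicitly allowed here) needs a little care in the base-case productions but is not a real difficulty.
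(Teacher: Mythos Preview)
Your grammar approach is a legitimate alternative to the paper's, but there is a real gap in the variable scheme. Your excursion variables $D_{i,p}$ (or $E_{p\to C}$) only record \emph{closed} excursions that enter and leave a cone at the same boundary vertex. But a path can enter a cone $C$ of type $i$ at some $p\in\bd C_i$ and leave at a different $q\in\bd C_i$; indeed, this is the generic situation, since $\bd C_i$ may have many vertices. So you need variables $D_{i,p,q}$ indexed by ordered pairs $p,q\in\bd C_i$, and the productions must read roughly $D_{i,p,q}\vdash a\,D_{j,p',q'}\,b\,D_{i,r,q}$ whenever $(p,a,p')$ is an edge from $\bd C_i$ into the boundary of a successor cone of type $j$, $(q',b,r)$ is an edge back from that successor's boundary to $r\in\bd C_i$, and $p',q'$ are identified via the cone isomorphism with boundary vertices of $C_j$. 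Your ``chop at first and last boundary contact'' is also not the right parsing discipline for a grammar; you need to decompose at \emph{consecutive} visits to $\bd C_i$, which is exactly what the productions above encode. With these fixes the grammar route goes through, and finiteness of the rule set follows from the finitely many cone types together with the uniform bound on $|\bd C_i|$.

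The paper does not build a grammar at all. It goes straight to a pushdown automaton whose states are $\bigcup_i \bd C_i$ and whose stack symbols record which successor cone was entered (the symbols $t_{i,j}^k$); the automaton simply follows the path, pushing when the distance to $F$ increases, popping when it decreases, and changing state only when it stays the same. This single construction yields both conclusions at once: context-freeness always, and determinism of the language whenever the graph is deterministic, because then the transition function of the PDA is visibly single-valued. Your separate PDA sketch for the deterministic case is essentially this same machine, so you are in effect doing the paper's proof for the second claim and a different (and slightly more laborious) proof for the first. The paper's route is more economical; yours, once repaired, has the minor advantage of producing an explicit grammar.
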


\begin{proof}
Just for the purpose of this proof, we write $x_0\,, y_0$ instead of $x,y$
for the vertices for which $L_{x_0,y_0}$ will be shown to be context-free.
We may assume without loss of generality
that $x_0\,, y_0$ in $F$. Indeed, if this is not the case, then we
can replace $F$ by $F' = B(F,n)$, which contains $x_0$ and $y_0$ when $n$ is
sufficiently large. The cones with respect to $F'$ are also cones
with respect to $F$, so that $\XX$ is also context-free with respect to $F'$.  

\smallskip

Similarly to \cite[Lemma 2.3]{MS2}, we construct a deterministic pushdown 
automaton that accepts $L_{x_0,y_0}\,$. 


\smallskip

We consider also the whole graph $\XX$ as a cone 
$C_0$ with boundary $F$, which we keep apart from the other representatives
$C_1, \dots, C_r$ of cones. 

If $C$ is a cone, then as a component of $\XX \setminus B(F,n)$ for some 
$n \ge 0$ it must be a \emph{successor} of another cone $C^-$.
The latter is the unique component of $\XX \setminus B(F,n-1)$
that contains $C$, when $n \ge 1$, while it is $C_0 =\XX$ when $n=0$. We also
call $C^-$ the \emph{predecessor} of $C$.

Different cones of type $j \in \{ 1, \dots, r\}$ may have predecessors 
of different types. Conversely, a cone $C$ of type $i\in \{ 0, \dots, r\}$ 
may have  none, one or more than one successors of type $j$, and the 
number $d_{i,j}$ of those 
successors depends only on $i$ and $j$. In the representative cone $C_i\,$,
we choose and fix a numbering of the distinct successors of type $j$ as 
$C_{i,j}^k\,$, $k=1, \dots, d_{i,j}\,$. If $C$ is any cone with type $i$
then we use the isomorphism $\phi_C: C \to C_i$ to transport this numering to
the successors of $C$ that have type $j$, which allows us to identify the
$k$-th successor of $C$ with type $j$.  

One can visualize the cone structure by a finite,
oriented graph $\Gamma$ with multiple edges and root $0$: the vertex set is 
the set of cone types $i\in \{ 0, \dots, r\}$, and there are 
$d_{i,j}$ oriented edges, which we denote by $t_{i,j}^k\,$ 
($k=1, \dots, d_{i,j}$) from vertex $i$ to vertex $j$ ($i \ge 0$, $j \ge 1$).    

\smallskip
   
Every vertex $x$ of $\XX$ belongs to the boundary of precisely one cone 
$C=C(x)$ with respect to $F$. We define the \emph{type} $i$ of $x$ as the type of $C(x)$.
Under the mapping $\phi_C$, our $x$ corresponds to precisely one
element of $\bd C_i$. We write $\phi(x)$ for that element, without 
subscript $C$, so that $\phi$ maps $\VV$ onto $\bigcup_i \bd C_i\,$. 
In particular, $\phi(x)=x$ for every $x \in F$.

Let $y \in \VV \setminus F$ with type $j$. Then there is $i$ (depending on $y$)
such that every neighbour $x$ of $y$ with $d(x,F) = d(y,F)-1$ has type $i$, 
and there is precisely one successor cone $C_{i,j}^k$ of $C_i$ that 
contains $\phi_{C(x)}(y)$. In this case, we write $\tau(y) = t_{i,j}^k\,$, 
the \emph{second order type} of $y$. Compare with \cite{MS2}.
If $y'$ is such that $C(y')=C(y)$ then $\tau(y') = \tau(y)$.

We now finally construct the required pushdown automaton $\Acal$. 
(Comparing with 
\cite{MS2}, we use more states and stack symbols, which facilitates
the description.) The set of states and stack symbols are 
$$
\Q = \biguplus_{i=0}^r \bd C_i \AND 
\Z = F \cup \bigl\{ t_{i,j}^k : i = 1, \dots, r\,,\; j=0, \dots, r\,,\;
k=1, \dots, d_{i,j} \bigr\}\,.
$$
(When $d_{i,j}=0$ then there is no $t_{i,j}^k$.)
Note that both sets contain $F$. In order to generate the language 
$L_{x_0,y_0}\,$, where $x_0, y_0 \in F$, then we use $x_0$ as the initial state
and $y_0$ as the (only) final state. We describe the transition function,
which -- like $\Q$ and $\Z$ -- does not depend on $x_0, y_0\,$.

We want to read an input word, which has to correspond to the label starting at
$x_0$. Inside the subgraph of $\XX$ induced by $F$, our $\Acal$ behaves just like
that subgraph, seen as a finite automaton. 

Outside of $F$, it works as follows.
At the $m$-th step, the automaton will be in a state that descibes
the $m$-th vertex, say $x$, of that path, by identifying $x$ as above
with the element $\phi(x)$ of $C_j$, where $j$ is the type of $x$. 
The current stack symbol
is of the form $t_{i,j}^k$ and serves to recall that $x$ lies in the $k$-th
successor cone of type $j$ of a cone with type $i$. If the next vertex along 
the path, say $y$, satisfies $d(y,F) = d(x,F)+1$, and $y$ has type $j'$ then
the state is changed to $\phi(y) \in C_{j'}\,$, and
the symbol $t_{j,j'}^{k'} = \tau(y)$ is added to
the stack.  If $d(y,F) = d(x,F)$, then only the state is changed from
$\phi(x)$ to $\phi(y)$.
Finally, if $d(y,F) = d(x,F)-1$ then the new state is again 
$\phi(y)$, while the top symbol in the stack is deleted.   
Formally, we get the following list of transition rules.
$$
\begin{array}{lrcl}
\hspace*{.3cm}\text{If}\;x \in F = \Q \cap \Z\,:&&&\\[3pt]  
&\hspace*{-2.5cm}\de(x,a,x) 
&\!\!\!\!=\!\!\!\!& \bigl\{ (y,y) : (x,a,y) \in E\,,\;y \in F \bigr\}\\[3pt]
&&&\;\cup\; 
\bigl\{\bigl(\phi(y),x\tau(y)\bigr) : (x,a,y) \in E\,,\;d(y,F) = 1
\bigr\}.\\[3pt]
\hspace*{.3cm}\text{If}\;x \in \VV \setminus F\,:&&& \\[3pt]  
&\hspace*{-2.5cm}\de\bigl(\phi(x),a,\tau(x)\bigr) 
&\!\!\!\!=\!\!\!\!& \bigl\{ \bigl(\phi(y),a,\tau(x)\tau(y)\bigr) 
   : (x,a,y) \in E\,,\;d(y,F) = d(x,F)+1 \bigr\}\\[3pt]
&&&\;\cup\; 
\bigl\{\bigl(\phi(y),\tau(y)=\tau(x)\bigr) : (x,a,y) \in E\,,\;d(y,F) = d(x,F)
\bigr\}\\[3pt]
&&&\;\cup\; 
\bigl\{\bigl(\phi(y),\ep\bigr) : (x,a,y) \in E\,,\;d(y,F) = d(x,F)-1
\bigr\}
\end{array}
$$ 
This is a finite collection of transitions, since $\phi(\cdot)$ and 
$\tau(\cdot)$ can take only finitely many different values. 

In view of the above explanations, $\Acal$ accepts $L_{x_0,y_0}\,$.
Also, when the graph $\XX$ is deterministic, then so is $\Acal\,$. 
\end{proof}

Before proving a converse of Theorem \ref{thm:characterize1}, we first 
need some preliminaries, and start by recalling a fact proved in 
\cite{MS1} and \cite{MS2}, see also {\sc  Woess} \cite{Wtrees} and  
{\sc Berstel and
Boasson}~\cite{BeBo}.

\begin{lem}\label{lem:diam}
If $L_{o,o}$ is context-free then there is a constant $M$ such that for 
each cone $C$ with respect to $o$, one has
$\;\diam(\bd C) \le M\,.$
\end{lem}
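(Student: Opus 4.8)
I would prove Lemma \ref{lem:diam} by contradiction, using the pumping lemma for context-free languages applied to $L=L_{o,o}(\XX)$. Suppose no such constant $M$ exists. Then for every $N$ there is a cone $C$ (with respect to $o$, arising as a component of $\XX\setminus B(o,n)$ for some $n$) and two boundary vertices $u,v\in\bd C$ with $d(u,v)>N$. Since $\XX$ is connected, one can build a closed path at $o$ of the following shape: go from $o$ into $C$ to $u$, traverse a geodesic inside $C$ from $u$ to $v$ (this is the long, ``deep'' stretch — it lies entirely in $C$, hence at distance $\ge n$ from $o$), then come back from $v$ to $o$. Call the label of this word $w\in L$. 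The point is that the ``middle'' of $w$ consists of labels of edges all at distance $\ge n$ from $o$, while the outer parts connect $o$ to the boundary of the ball.

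**The core argument.**
Let $p$ be the pumping constant for $L$. Choose $n$ large (say $n>p$, or large enough that the short ends can't reach the deep part) and a cone $C$ at level $n$ with two boundary points at distance $>3p$ in $C$; form the closed path $w$ as above, with a geodesic segment $\gamma$ inside $C$ of length $>3p$ sitting in the middle of $w$. Apply the pumping lemma: $w=\alpha\beta\zeta\delta\eta$ with $|\beta\zeta\delta|\le p$, $|\beta\delta|\ge 1$, and $\alpha\beta^k\zeta\delta^k\eta\in L$ for all $k\ge 0$. Because $|\beta\zeta\delta|\le p$ and $\gamma$ has length $>3p$, the entire pumped block $\beta\zeta\delta$ is contained within the reading of $\gamma$, hence corresponds to a subpath of $\gamma$ lying entirely inside $C$, at distance $\ge n$ from $o$. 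Now take $k=2$: the word $w'=\alpha\beta^2\zeta\delta^2\eta\in L$ is the label of a closed path at $o$. But one can analyze this path: it follows $w$ up to a vertex $x$ on $\gamma$ (so $d(x,o)\ge n$), then traverses $\beta$ (a loop or a detour), ... . The contradiction is quantitative: reading $\beta$ from $x$ lands at some vertex $x'$, and since all of $\beta$'s edges are inside $C$ which is separated from $o$ by $B(o,n)$, the path $w'$ never returns to $B(o,n-1)$ during the pumped portions; yet $w'$ must be a closed path at $o$, and a length count shows that after inserting the extra copies the path cannot close up — more precisely, one shows that the endpoint of the path labelled $\alpha\beta^2$ differs from the endpoint of the path labelled $\alpha\beta$ (determinism is not assumed, so one must argue along the specific path realizing $w\in\Pi_{o,o}$), forcing the continuation $\zeta\delta^2\eta$ to land somewhere other than $o$.

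**The main obstacle.**
The delicate point — and where I expect to spend the real effort — is controlling what pumping does combinatorially in a graph that is only assumed symmetric, not deterministic: a word in $L$ can label many closed paths, so ``pumping'' on the level of words does not obviously correspond to inserting a loop in a path. The clean way around this is to fix once and for all the specific closed path $\pi\in\Pi_{o,o}(\XX)$ whose label is $w$, read the pumping decomposition along $\pi$ (it factors $\pi=\pi_\alpha\pi_\beta\pi_\zeta\pi_\delta\pi_\eta$ accordingly), and observe that $\pi_\beta$ and $\pi_\delta$ are subpaths living strictly inside the cone $C$. Then $\pi_\beta$ runs from some $x$ to some $x'$ with $d(x,C^c),d(x',C^c)\ge$ something, and $\pi_\delta$ from $y'$ to $y$ similarly. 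Concatenating $\pi_\alpha\pi_\beta\pi_\beta$ is a legitimate path only if $\pi_\beta$ is a closed loop ($x=x'$); in general it isn't, so the word $\alpha\beta^2\zeta\delta^2\eta$, while in $L$, is realized by some \emph{other} path — but that other path must still traverse from $o$ down into $C$, and a pigeonhole/length argument on how deep it goes versus how many edges it has available yields the contradiction. This is exactly the argument appearing in \cite{MS1}, \cite{MS2}, \cite{Wtrees}, \cite{BeBo}, so I would set it up carefully for general symmetric graphs and then cite those sources for the combinatorial core, since the statement explicitly says it is ``a fact proved in'' those papers.
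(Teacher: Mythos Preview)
Your proposal has a genuine gap at the very point you label ``the core argument.'' You write: ``Because $|\beta\zeta\delta|\le p$ and $\gamma$ has length $>3p$, the entire pumped block $\beta\zeta\delta$ is contained within the reading of $\gamma$.'' This inference is false. The pumping lemma guarantees only that \emph{some} decomposition $w=\alpha\beta\zeta\delta\eta$ with $|\beta\zeta\delta|\le p$ exists; it gives you no control over where in $w$ that window sits. Nothing prevents $\beta\zeta\delta$ from lying entirely inside the label of the geodesic $\pi_1$ from $o$ to $u$, or inside $\pi_2$, regardless of how long $\gamma$ is. One might try to rescue this with Ogden's lemma by marking the positions of $\gamma$, but even then the block can straddle the interface between $\gamma$ and one of the flanking geodesics, and---more seriously---once you pump down you obtain a new word in $L_{o,o}$ whose associated path (even in the deterministic case) need not stay in $C$ and need not connect $u$ to $v$ at all. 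So the intended contradiction (``the path cannot close up'') never materialises: the pumped word \emph{is} in $L_{o,o}$, hence it \emph{does} label some closed path at $o$; you have no handle on that path's geometry. The vague ``pigeonhole/length argument'' you allude to in the last paragraph does not exist in the literature you cite and I do not see how to make it precise.

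The paper's route is entirely different and avoids pumping altogether. One takes a grammar $\Ccal$ in Chomsky normal form generating $L_{o,o}$ and sets $m=\max_{T\in\V}\min\{|v|:T\thens v\}$. Given $x_1,x_2\in\bd C$, one forms the closed word $w=\ell(\pi_1)\ell(\pi)\ell(\pi_2)$ exactly as you suggest, but then considers the diagonal triangulation of the polygon $\PP(w)$ coming from a derivation $S\thens w$ (Lemma~\ref{lem:triangulation}). A combinatorial fact about such triangulations (\cite[Lemma~5]{MS1}) says that some triangle touches all three arcs of the polygon corresponding to $\pi_1$, $\pi$, $\pi_2$. Each side of that triangle is a diagonal $(t_i,T,t_j)$, and Lemma~\ref{lem:diagonal} (which uses determinism of $\XX$) converts this into the bound $d(o^{a_1\cdots a_i},o^{a_1\cdots a_j})\le m$. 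Chaining the three sides gives $d(x_1,x_2)\le 3m$, so $M=3m$ works. This is the argument flagged in brackets inside the proof of Theorem~\ref{thm:characterize2}, and it is the method actually used in \cite{MS1}, \cite{MS2}, \cite{Wtrees}; none of those references proves the lemma via pumping.
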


(The diameter is of course taken with respect to the graph metric.)
We shall see below how to deduce this, but it is good to know it in advance.

A context-free grammar $\Ccal = (\V,\Si,\Pb,S)$ is said to have
\emph{Chomsky normal form (CNF)}, if (i) every production rule is of the form
$T \vdash U\hat U$ or $T \vdash a$, where $U, \hat U \in \V$ (not necessariliy
distinct), resp. $a \in \Si$, and (ii) if $\epsilon \in L(\Ccal)$,
then there is the rule $S \vdash \epsilon$, and $S$ is not contained in
the right hand side of any production rule.

With a slight deviation from \cite{MS1}, we associate with each 
$w=a_1 \cdots a_n \in L(\Ccal)$,  $n \ge 2$ a labelled (closed)
\emph{polygon} $\PP(w)$ with length $n+1$. As a directed graph, it 
has distinct vertices $t_0, t_1, \dots, t_n$ and labelled edges 
$(t_{i-1},a_i,t_i)$, $i = 1, \ldots, n$, plus the edge $(t_0, S, t_n)$.
A \emph{(diagonal) triangulation}
of $\PP(w)$ is a plane triangulation of $\PP(w)$ obtained by inserting
only diagonals. Here, we specify those diagonals as oriented, labelled edges
$(t_i,T,t_j)$, where $t_i, t_j$ are not neighbours in $\PP(w)$ and $T \in \V$.
Furthermore, we will never have two diagonals between the same pair of 
vertices of  $\PP(w)$.
(If $|w| \le 2$ we consider $\PP(w)$
itself triangulated.) The proof of the following Lemma
may help to make the construction of \cite{MS1} (used for Cayley graphs of
groups) more transparent. 

\begin{lem}\label{lem:triangulation} 
If $\Ccal = (\V,\Si,\Pb,S)$ is in CNF and $w=a_1 \cdots a_n \in L(\Ccal)$ 
with $n \ge 2$ then there is a diagonal triangulation of $\PP(w)$ with the 
property that whenever $(t_i,T,t_j)$ is a diagonal edge, then $T$ occurs in
a derivation $S \thens w$, $j-i \ge 2$
and $T \thens a_{i+1} \cdots a_j$.  
\end{lem}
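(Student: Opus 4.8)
The plan is to prove Lemma \ref{lem:triangulation} by induction on $n = |w|$, exploiting the Chomsky normal form. Since $\Ccal$ is in CNF and $w = a_1 \cdots a_n \in L(\Ccal) = L_S$ with $n \ge 2$, the start of any derivation $S \thens w$ must use a rule $S \vdash U \hat U$ (a rule $S \vdash a$ would force $n = 1$, and $S \vdash \ep$ is impossible since $n \ge 2$). So $S \thens w$ factors as $S \then U\hat U \thens a_1 \cdots a_j \cdot a_{j+1}\cdots a_n$, where $U \thens a_1 \cdots a_j$ and $\hat U \thens a_{j+1} \cdots a_n$ for some $1 \le j \le n-1$. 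The idea is to insert the diagonal $(t_0, S, t_n)$ — wait, that is already the closing edge of $\PP(w)$; rather, the split is recorded by the edge $(t_j, S, t_n)$ together with $(t_0, U, t_j)$ on one side and the treatment of the suffix on the other. Let me restate more carefully: I want a triangulation whose diagonals $(t_i, T, t_j)$ all satisfy $j - i \ge 2$, $T$ appears in $S \thens w$, and $T \thens a_{i+1}\cdots a_j$.

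The cleaner way is a stronger induction hypothesis phrased for subwords. First I would prove the following auxiliary claim by induction on $\ell \ge 1$: \emph{if $T \in \V$ and $T \thens a_{i+1} \cdots a_{i+\ell}$ is a derivation, then in the sub-polygon on vertices $t_i, t_{i+1}, \dots, t_{i+\ell}$ closed off by the edge $(t_i, T, t_{i+\ell})$ there is a diagonal triangulation all of whose diagonal edges $(t_p, R, t_q)$ satisfy $q - p \ge 2$, $R$ occurs in the derivation $T \thens a_{i+1}\cdots a_{i+\ell}$ (hence, once we apply this with $T = S$, in $S \thens w$), and $R \thens a_{p+1} \cdots a_q$.} For the base cases $\ell = 1$ (the rule used is $T \vdash a_{i+1}$) and $\ell = 2$ (the edge $(t_i, T, t_{i+2})$ closes a triangle with $(t_i, a_{i+1}, t_{i+1})$ and $(t_{i+1}, a_{i+2}, t_{i+2})$, no diagonals needed — this matches the parenthetical remark in the statement that $\PP(w)$ with $|w| \le 2$ is considered triangulated). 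For $\ell \ge 3$: the derivation $T \thens a_{i+1}\cdots a_{i+\ell}$ begins with a rule $T \vdash U\hat U$ and splits at some index $i+s$ with $U \thens a_{i+1}\cdots a_{i+s}$ and $\hat U \thens a_{i+s+1}\cdots a_{i+\ell}$, $1 \le s \le \ell - 1$. I add at most the two diagonals $(t_i, U, t_{i+s})$ and $(t_{i+s}, \hat U, t_{i+\ell})$ — each added only when the corresponding index gap is $\ge 2$ (if $s = 1$ the first "diagonal" is really the polygon edge $(t_i, a_{i+1}, t_{i+1})$ up to relabelling, and similarly if $s = \ell-1$) — forming the triangle $t_i, t_{i+s}, t_{i+\ell}$, and then recurse on the two smaller polygons. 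The recursion is valid because $s < \ell$ and $\ell - s < \ell$.

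The main technical point to get right is the bookkeeping at the extreme splits $s = 1$ and $s = \ell - 1$, where one of the two "diagonals" coincides (as an unoriented pair of endpoints) with an already-present polygon edge rather than being a genuine diagonal; here one must not insert a second edge between the same pair of vertices (the statement explicitly forbids two diagonals between the same pair, and we must also respect the existing polygon edges), and one should note that in that case the triangle $t_i, t_{i+1}, t_{i+\ell}$ is bounded by the polygon edge $(t_i, a_{i+1}, t_{i+1})$, the single new diagonal $(t_{i+1}, \hat U, t_{i+\ell})$, and the closing edge $(t_i, T, t_{i+\ell})$, which is legitimate. A secondary point is verifying that the diagonals produced across the two recursive calls, together with the one(s) added at this level, form a genuine plane triangulation of the polygon (no crossings): this is clear because the level-$\ell$ triangle $t_i t_{i+s} t_{i+\ell}$ separates the polygon into the two sub-polygons handled independently, so an easy induction shows the union is a plane triangulation. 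Finally, applying the claim with $i = 0$, $\ell = n$, $T = S$ and the closing edge $(t_0, S, t_n)$ of $\PP(w)$ gives exactly the assertion of the lemma; the condition "$R$ occurs in $S \thens w$" follows since every variable appearing in the sub-derivation $S \thens a_1 \cdots a_n$ occurs in that derivation, which is one of the derivations $S \thens w$. I expect the plane-triangulation / no-crossing verification and the $s \in \{1, \ell-1\}$ edge-case handling to be the only places needing care; everything else is a routine structural induction driven by CNF.
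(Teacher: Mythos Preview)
Your proposal is correct and is essentially the same construction as the paper's, just packaged as a structural induction on the length of the derived subword rather than as a step-by-step walk through a fixed derivation $S \thens w$. In both cases one inserts, for each rule application $T \vdash U\hat U$ with $T \thens a_{i+1}\cdots a_k$, the triangle on $t_i,t_j,t_k$ (where $U \thens a_{i+1}\cdots a_j$ and $\hat U \thens a_{j+1}\cdots a_k$); the only cosmetic difference is that the paper first labels \emph{every} new edge---including the length-one ones---by its variable and at the very end replaces the boundary labels $U_i$ by $a_i$ via the terminal rules $U_i \vdash a_i$, whereas you simply refrain from inserting an edge when the gap is $1$. Your careful treatment of the $s\in\{1,\ell-1\}$ cases and of planarity corresponds exactly to what the paper leaves implicit.
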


\begin{proof} We start with a fixed derivation $S \thens w$, and explain 
how to build up the triangles step by step. 
Suppose that $T \in \V$ occurs in our derivation, and that we have a 
``sub-derivation''
$T \vdash U \hat U \thens a_{i+1}\cdots a_k$, where $U, \hat U \in \V$.
Then there is $j \in \{i+1, \dots, k-1\}$ such that
$U \thens a_{i+1}\cdots a_j$ and $\hat U \thens a_{j+1}\cdots a_k$.
In this case, we draw a triangle with three oriented, labelled edges, namely
the \emph{`old'} edge $(t_i, T, t_k)$ and the two \emph{`new'} edges  
$(t_i, U, t_j)$ and $(t_j, \hat U, t_k)$.

If we have the derivation $S \thens a_1 \cdots a_n$, then it uses
successive steps of the form $T \vdash U \hat U$ with 
$U \hat U \thens a_{i+1}\cdots a_k$  
as above. We work through these steps one after the other, starting
with $S \vdash T_1 \hat T_1$, where $T_1 \thens a_1 \dots a_k$ and
$\hat T_1 \thens a_{k+1}\cdots a_n$. The first triangle has the `old' edge
$(t_0,S,t_n)$ and the `new' edges $(t_0, T_1, t_k)$ and 
$(t_k, \hat T_1, t_n)$.

At any successive step, we take one of the `new' edges $(t_i,T,t_k)$,
where $k-i \ge 2$ and proceed as explained at the beginning, so that we
add two `new' edges that make up a triangle together with $(t_i,T,t_k)$,
which is then declared `old'. We continue until all derivation steps
of the form $T \vdash U \hat U$ in our derivation $S \thens w$ are exhausted.
At this point, we have obtained a tiling of triangles that constitute 
a diagonal triangulation of its outer polygon, whose edges have the
form $(t_0,S,t_n)$ and $(t_{i-1},U_i,t_i)$ with $U_i \in \V$, $i=1, \ldots, n$.
The only steps of our derivation that we have not yet considered are
the terminal ones $U_i \vdash a_i$. Thus, we conclude by replacing the label
$U_i$ of $(t_{i-1},U_i,t_i)$ by $a_i$.
\end{proof}

The construction is best understood by considering an example: suppose
our rightmost derivation is
$$
\begin{array}{rll}
S \vdash T_1 \hat T_1 
&\then T_1 (T_2 \hat T_2) &\then T_1 (T_2 (T_3 \hat T_3)) \\
&\then T_1 (T_2 (T_3 a_6)) &\then T_1 (T_2 ((T_4 \hat T_4)a_6)) \\ 
&\then T_1 (T_2 ((T_4 a_5)a_6)) &\then T_1 (T_2 ((a_4 a_5)a_6)) \\
&\then T_1 (a_3 ((a_4 a_5)a_6)) &\then T_1 (a_3 ((a_4 a_5)a_6)) \\
&\then (T_5 \hat T_5) (a_3 ((a_4 a_5)a_6)) 
&\then (T_5 a_2) (a_3 ((a_4 a_5)a_6)) \\
&\then (a_1 a_2) (a_3 ((a_4 a_5)a_6))&
\end{array}
$$
(We have inserted the parentheses to make the rules that we used in
each step more visible.) 
The associated triangulation is as follows. 


\begin{center}
\begin{picture}(220,220)
\put(70,20){\circle{28}}
\put(150,20){\circle{28}}
\put(30,80){\circle{28}}
\put(40,150){\circle{28}}
\put(190,80){\circle{28}}
\put(180,150){\circle{28}}
\put(110,190){\circle{28}}

\put(66,18){$t_5$}
\put(146,18){$t_4$}
\put(26,78){$t_6$}
\put(36,148){$t_0$}
\put(186,78){$t_3$}
\put(176,148){$t_2$}
\put(106,188){$t_1$}

\put(62,178){$a_1$}
\put(142,178){$a_2$}
\put(22,112){$S$}
\put(191,114){$a_3$}

\put(176,44){$a_4$}
\put(33,44){$a_6$}
\put(103,10){$a_5$}
\put(106,86){$\hat{T}_2$}

\put(96,120){$\hat{T}_1$}
\put(102,156){${T}_1$}

\put(116,53){${T}_3$}
\put(84,20){\line(1,0){52}}
\put(136,20){\vector(-1,0){32}}
\put(58,28){\line(-1,2){20}}
\put(162,28){\line(1,2){20}}
\put(58,28){\vector(-1,2){11}}
\put(182,68){\vector(-1,-2){12}}

\put(31,94){\line(1,6){7}}
\put(35.2,117){\vector(1,3){0.1}}
\put(191,94){\line(-1,6){7}}
\put(187.7,115){\vector(1,-3){0.1}}

\put(54,150){\line(1,0){112}}
\put(54,150){\vector(1,0){56}}

\put(44,80){\line(1,0){132}}
\put(166,80){\vector(-1,0){56}}

\put(178,73){\line(-2,-1){96}}
\put(178,73){\vector(-2,-1){56}}

\put(166,145){\line(-2,-1){123}}
\put(166,145){\vector(-2,-1){65}}

\put(49,160){\line(2,1){48}}
\put(49,160){\vector(2,1){24}}
\put(122,184){\line(2,-1){48}}
\put(122,184){\vector(2,-1){25}}
\end{picture}
\end{center}
\begin{center}
\emph{Figure 2.}
\end{center}
\vspace{.5cm}

The variables of the terminal rules 
$T_5\vdash a_1\,$, $\hat T_5 \vdash a_2\,$, $T_2 \vdash a_3\,$,
$T_4\vdash a_4\,$, $\hat T_4\vdash a_5$ and $\hat T_3\vdash a_6$ are not 
visible in this figure (but we might add them to the boundary edges).
Apart from this, one can read the derivation
$S \thens w$ from the diagonalization in a similar way as it can
be read from the so-called derivation tree (see e.g. \cite[\S 1.6]{Ha}
for the latter.) 

The following goes back to \cite{MS1} in the case of (Cayley graphs of) finitely
generated groups.

\begin{lem}\label{lem:diagonal} 
Let $\Ccal = (\V,\Si,\Pb,S)$ be in CNF and $L(\Ccal) = L_{x,y}(\XX)$, where
$\XX$ is a deterministic, symmetric graph. If 
$w=a_1 \cdots a_n \in L_{x,y}(\XX)$ and $(t_i,T,t_j)$ is a diagonal edge
in a triangulation of $\PP(w)$ as in Lemma \ref{lem:triangulation}, then
the vertices $\bar x = x^{a_1 \cdots a_i}$ and $\bar y = x^{a_1 \cdots a_j}$
of $\XX$ satisfy $d(\bar x,\bar y) \le m(T)$, where
\begin{equation}\label{eq:mT}
d(\bar x,\bar y) \le m(T) = \min\{ |w| : w \in L_T \}\,.
\end{equation}
\end{lem}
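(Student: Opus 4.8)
The plan is to use the triangulation of $\PP(w)$ from Lemma \ref{lem:triangulation} together with the combinatorial structure of derivations in CNF. The key observation is that a diagonal edge $(t_i,T,t_j)$ carries a derivation $T \thens a_{i+1}\cdots a_j$, and by definition of $m(T)$ there is \emph{some} word $w' \in L_T$ with $|w'| = m(T)$. Since $\XX$ is deterministic and symmetric, the label word $a_1\cdots a_n \in L_{x,y}(\XX)$ determines the unique path $\pi_x(a_1\cdots a_n)$ from $x$ to $y$, and the intermediate vertices $\bar x = x^{a_1\cdots a_i}$, $\bar y = x^{a_1\cdots a_j}$ are well defined by Lemma/Definition \ref{def:pathfromx}. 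So I must show that $\bar x$ and $\bar y$ are joined by \emph{some} path, and that its length can be bounded by $m(T)$.

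First I would argue that the word $a_{i+1}\cdots a_j$ is the label of a path from $\bar x$ to $\bar y$ in $\XX$: indeed, $\pi_x(a_1\cdots a_j)$ decomposes as $\pi_x(a_1\cdots a_i)$ followed by a path with label $a_{i+1}\cdots a_j$ starting at $\bar x$, and that second piece must end at $x^{a_1\cdots a_j} = \bar y$ by determinism. Hence $a_{i+1}\cdots a_j \in L_{\bar x,\bar y}(\XX)$. The core step is then to replace this particular label by the minimal one. Since $T \thens a_{i+1}\cdots a_j$ and also $T \thens w'$ for the minimal $w' \in L_T$, I want to conclude $w' \in L_{\bar x,\bar y}(\XX)$ as well, which immediately gives $d(\bar x,\bar y) \le |w'| = m(T)$ because $w'$ labels a path of that length from $\bar x$ to $\bar y$.

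The main obstacle is precisely justifying that every word derivable from $T$ labels a path between the \emph{same} pair of endpoints $\bar x, \bar y$ — i.e.\ that $L_T \subseteq L_{\bar x,\bar y}(\XX)$ whenever $T$ occurs at a diagonal $(t_i,T,t_j)$ in a triangulation of some $w \in L_{x,y}(\XX)$. I expect the authors prove this by induction on the CNF-derivation structure (equivalently, on the triangulation): for a terminal rule $T \vdash a$ there is nothing to prove, and for $T \vdash U\hat U$ with the subtriangle splitting at $t_k$, the inductive hypothesis gives $L_U \subseteq L_{\bar x, x^{a_1\cdots a_k}}(\XX)$ and $L_{\hat U} \subseteq L_{x^{a_1\cdots a_k}, \bar y}(\XX)$, and concatenation of these path-sets lands in $L_{\bar x,\bar y}(\XX)$; since $L_T \subseteq L_U L_{\hat U}$, the claim propagates. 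Care is needed about the base case (how $T$ first enters, from the splitting of $S$ at the top triangle, where $\bar x = x$, $\bar y = y$ and $w$ itself already lies in $L_{x,y}$), and about the fact that the CNF assumption makes every nonterminal produce only words of length $\ge 1$ so $m(T)$ is a finite positive integer and the bound $j - i \ge 2$ from Lemma \ref{lem:triangulation} is consistent; the symmetry of $\XX$ is what guarantees $L_{\bar x,\bar y}(\XX)$ behaves well (e.g.\ is nonempty in both directions), though for this particular bound one really only needs that $w'$ labels \emph{a} path from $\bar x$ to $\bar y$.
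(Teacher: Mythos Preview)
Your overall plan is right --- show that the minimal word $v \in L_T$ labels a path from $\bar x$ to $\bar y$ --- but the inductive argument you sketch has a real gap. In the step ``since $L_T \subseteq L_U L_{\hat U}$'' you are using the \emph{particular} rule $T \vdash U\hat U$ that occurs in the given triangulation, but $T$ may have other production rules $T \vdash V\hat V$ or $T \vdash a$, and words of $L_T$ coming from those need not lie in $L_U L_{\hat U}$. The same problem already appears at the leaves: if the boundary edge $(t_{i-1},a_i,t_i)$ arose from $U_i \vdash a_i$, the language $L_{U_i}$ is generally much larger than $\{a_i\}$, so your base case does not give $L_{U_i} \subseteq L_{x^{a_1\cdots a_{i-1}},\,x^{a_1\cdots a_i}}$. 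In short, the induction along the triangulation only controls the words derivable \emph{via that specific derivation tree}, not all of $L_T$, and the minimal $v$ need not be one of those.

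The paper avoids induction entirely by using the other half of Lemma~\ref{lem:triangulation}: the diagonal $(t_i,T,t_j)$ records that $T$ occurs in a derivation $S \thens a_1\cdots a_i\,T\,a_{j+1}\cdots a_n \thens w$. Hence for \emph{any} $v \in L_T$ one has $a_1\cdots a_i\,v\,a_{j+1}\cdots a_n \in L(\Ccal)=L_{x,y}$. Now determinism gives that the unique path in $\XX$ with this label, starting at $x$, passes through $\bar x = x^{a_1\cdots a_i}$; symmetry is then used to read the path backwards from its endpoint $y$, so that the vertex reached before the final segment $a_{j+1}\cdots a_n$ is $y^{-a_{j+1}\cdots a_n}=\bar y$. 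Thus the middle piece $v$ labels a path from $\bar x$ to $\bar y$, giving $d(\bar x,\bar y)\le |v|=m(T)$. Note that symmetry is not a side remark here but the mechanism that pins down $\bar y$ from the right; your inductive route tried to work purely left-to-right, which is why it could dispense with symmetry but then could not close.
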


\begin{proof} Since $\XX$ is deterministic, Lemma \ref{def:pathfromx} 
implies that $\pi_x(w)$ exists as the unique path with initial vertex $x$
and label $w$. In particular, $\bar x$ and $\bar y$ lie on that path.
Furthermore, we have $\bar y = y^{-a_{j+1} \cdots a_n}$.
 
Now let $v \in L_T$ with $|v|=m(T)$. Then by Lemma 
\ref{lem:triangulation}, $T$ arises in a
derivation $S \thens a_1 \cdots a_i T a_{j+1} \cdots a_n \thens w$.
But then we also have $S \thens a_1 \cdots a_i v a_{j+1} \cdots a_n$,
a word in $L_{x,y}$. By Lemma \ref{def:pathfromx}, again using that $\XX$ is
symmetric and deterministic,
$\bar x^v = y^{-a_{j+1} \cdots a_n} = \bar y$. Therefore,
$\bar x$ and $\bar y$ are connected by a path with label $v$. Its length
is $m(T)$.  
\end{proof}  

\begin{thm}\label{thm:characterize2} 
Let $(\VV,E,\ell)$ be a fully deterministic, symmetric graph  with
label alphabet $\Si$ and root $o$. If $L_{o,o}$
is a context-free language, then $\XX$ is a context-free graph with respect
to $o$, and in particular, $L_{o,o}$ is deterministic.
\end{thm}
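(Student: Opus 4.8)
The plan is to prove the converse direction of Theorem \ref{thm:characterize1}: given that $L_{o,o}$ is context-free, we must produce only finitely many isomorphism types of cones with respect to $o$. The natural approach, following \cite{MS1} and \cite{MS2}, is to fix a context-free grammar $\Ccal$ in Chomsky normal form with $L(\Ccal) = L_{o,o}(\XX)$, and to use the diagonal triangulations of Lemma \ref{lem:triangulation} together with the metric estimate of Lemma \ref{lem:diagonal} to show that any cone $C$ is ``recognized'' near its boundary by bounded data. The key point is that since $\XX$ is fully deterministic, a vertex $\bar x = o^{a_1 \cdots a_i}$ is determined by the word $a_1 \cdots a_i$, and a diagonal edge $(t_i, T, t_j)$ in a triangulation of $\PP(w)$ with $w = a_1 \cdots a_n \in L_{o,o}$ connects $\bar x$ and $\bar y = o^{a_1 \cdots a_j}$ by a path of length $\le m(T)$, a bound depending only on $T$ (hence on the grammar). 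Set $M = \max_{T \in \V} m(T)$; this will be the constant witnessing Lemma \ref{lem:diam}.

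The first step is to establish Lemma \ref{lem:diam} itself as a consequence: given a cone $C$ at distance $n$ from $o$ and two boundary vertices $u, v \in \bd C$, I want to bound $d(u,v)$ by a constant. Choose a geodesic from $o$ through $u$ into $C$ and a geodesic from $o$ through $v$ into $C$; more carefully, since $u,v$ have neighbours in $B(o,n-1)$, pick closed paths at $o$ realizing these, concatenate to form a word $w \in L_{o,o}$ that visits $u$ and then $v$ after passing through the ball. One then argues, using the triangulation of $\PP(w)$, that the ``highest'' diagonal edge straddling the portion of $w$ lying inside $C$ has both endpoints within bounded distance of $u$ and of $v$ respectively (since anything deeper in $C$ than depth $n$ must be cut off by a diagonal whose other endpoint is also deep, by the pumping/triangulation structure), giving $d(u,v) \le 2M + (\text{bounded})$. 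This is essentially the argument of \cite{MS1}, \cite{MS2}, \cite{Wtrees}, \cite{BeBo}, adapted to the deterministic graph setting.

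The second and main step is the actual finiteness of cone types. For a cone $C$ at depth $n$, I want to describe $C$ (as a labelled graph with boundary) by finite combinatorial data extracted from the grammar, so that two cones with the same data are isomorphic. The idea: pick a spanning structure of paths from $o$ into $C$ realizing words $w$ with triangulations; the diagonal edges crossing into $C$ at depth $n$ form a ``frontier'' and, by Lemma \ref{lem:diagonal}, each such diagonal $(t_i, T, t_j)$ records a variable $T$ and connects two $\bd C$-nearby vertices. The subgraph of $\XX$ generated (via $L_T$, for all such $T$) from this frontier data reconstructs $C$ up to isomorphism, because full determinism means the graph structure inside $C$ is forced once we know, for each boundary vertex, the collection of variable-labelled ``entry diagonals'' and their relative positions — and there are only finitely many such configurations since $|\V|$ is finite and $\diam(\bd C) \le M$ bounds how many boundary vertices and entry points can be involved. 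Making precise the claim ``the entry-diagonal data determines $C$'' is where the work concentrates: one shows by induction on depth within $C$ that every vertex and edge of $C$ lies inside a triangle of some triangulation, hence is captured by sub-derivations $T \thens \cdots$ of the finitely many $T$'s, and determinism pins down the isomorphism.

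The main obstacle I expect is precisely this last reconstruction: unlike the Cayley graph case treated in \cite{MS1}, where homogeneity lets one transport the whole structure by a group element, here one must argue purely combinatorially that the bounded ``boundary fingerprint'' of a cone (the multiset of entry diagonals with their variables, the induced labelled graph on $\bd C$, and the adjacency pattern) determines the entire cone. The deterministic hypothesis is what saves us — it guarantees that from a boundary vertex and a label one can follow at most one edge — but one has to check carefully that the triangulations cover \emph{all} of $C$, not just a spanning tree, so that no edge of $C$ is missed. I would handle this by choosing, for each edge $e = (x,a,y)$ with both endpoints in $C$, a closed path at $o$ whose label $w$ traverses $e$, and observing that $e$ then appears as a boundary edge $(t_{k-1}, a, t_k)$ of the triangulation of $\PP(w)$, so the triangle resting on $e$ connects $x$ and $y$ into the shared frontier structure; finiteness of $\V$ and the bound $M$ then force finitely many possibilities. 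The deterministic-language conclusion ($L_{o,o}$ deterministic) then follows from Theorem \ref{thm:characterize1} applied in the forward direction, since we have just shown $\XX$ is context-free with respect to $o$ and $\XX$ is deterministic.
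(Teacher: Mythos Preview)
Your plan follows the same route as the paper: fix a CNF grammar for $L_{o,o}$, use the triangulations of Lemma~\ref{lem:triangulation} and the bound $m = \max_T m(T)$ from Lemma~\ref{lem:diagonal}, attach to each cone $C$ a bounded ``fingerprint'' near $\bd C$, and show that this fingerprint determines $C$ up to isomorphism. The diameter bound (Lemma~\ref{lem:diam}) indeed falls out along the way, and the determinism of $L_{o,o}$ follows at the end from Theorem~\ref{thm:characterize1}.

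The gap is exactly where you flag it, and your proposed handling does not close it. Two points. First, the fingerprint cannot live on $\bd C$ alone: the triangulation diagonal $(t_i,T,t_j)$ that straddles the entry into $C$ has endpoints $y_1 = o^{a_1\cdots a_i}$ and $y_2 = o^{a_1\cdots a_j}$ which in general lie \emph{below} $\bd C$ (on geodesics from $o$), at distance up to $m$ from $\bd C$. The paper therefore builds the invariant on a collar $D(C)$ of such vertices, and enriches it with extra edges $(y_1,\,v_1Tv_2,\,y_2)$ whose labels record both the variable $T$ and the short geodesic segments $v_1,v_2$ from $y_i$ to $\bd C$; this enriched graph $\wt D(C)$ is the actual finite invariant. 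Second, your argument that ``every edge of $C$ lies in some triangle, hence is captured by the frontier data'' shows at best that $C$ is \emph{generated} from the fingerprint, not that two cones with the same fingerprint are \emph{isomorphic}. To get an isomorphism one must define a map $\phi:C\to C'$, and the only candidate is $\phi(x^v)=\phi(x)^{v}$ for $x\in\bd C$ and $v$ the label of a path in $C$. The crux is \emph{well-definedness}: if $x_1^{v}=x_2^{w}$ in $C$, why is $(x_1')^{v}=(x_2')^{w}$ in $C'$? The paper proves this (its Claims~1--2) by observing that the concatenated path $vw^{-1}$ from $x_1$ to $x_2$ in $C$ forces an edge $(y_1,v_1Tv_2,y_2)$ in $\wt D(C)$, which transports under the assumed isomorphism $\wt D(C)\cong\wt D(C')$ and pins down the endpoint in $C'$. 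Your ``induction on depth'' sketch does not supply this step, and without it the finiteness of cone types does not follow.
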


\begin{proof}  
There is a reduced grammar $\Ccal = (\V,\Si,\Pb,S)$ in CNF that
generates $L_{o,o}\,$. Each of the languages $L_T$, $T \in \V$, is non-empty,
only $L_S$ contains $\epsilon$, and we define
\begin{equation}\label{eq:m}
m=\max \{ m(T) : T \in \V \}\,,
\end{equation}
where $m(T)$ is as in \eqref{eq:mT}.

Let $C$ be a cone with respect to $o$ such that $k=d(o,\bd C) > m$.

\smallskip

\noindent
\emph{Construction of $\wt D(C)$.}
We define $D(C)$ as the subgraph of $\XX$ induced by all vertices 
$y \in \VV$ with
$$
d(o,x) = d(o,y)+d(x,y) \AND d(x,y) \le m \quad\text{for some}\;\;x \in \bd C\,.
$$
In particular, $y$ lies on some geodesic path from $o$ to $\bd C$. 

Now let $x_1, x_2 \in \bd C$, and consider some path $\pi \in  \Pi_{x_1,x_2}(C)$
(i.e., it lies in $C$). Choose a geodesic path $\pi_1$ from $o$ to
$x_1$ and a geodesic path $\pi_2$ from $x_2$ to $o$. Then we can concatenate
the three paths to a single path $\pi_1\pi\pi_2 \in \Pi_{o,o}$. Its label
is the word $w=\ell(\pi_1)\ell(\pi)\ell(\pi_2) \in L_{o,o}$. Set $n = |w|$
and write 
$$
w = (a_1 \cdots a_k)(a_{k+1} \cdots a_{n-k})(a_{n-k+1} \cdots a_n)\,
$$
where the 3 pieces in the parentheses are (in order) 
$\ell(\pi_1)$, $\ell(\pi)$ and $\ell(\pi_2)$. The words $\ell(\pi_1)$, 
$\ell(\pi)$ and $\ell(\pi_2)S$ are the labels of three consecutive arcs that  
fill the boundary of the polygon $\PP(w)$. (To be precise, along the last 
edge of the $3^{\text{rd}}$ arc,
we are reading the label $S$ in the reversed direction.) 
By \cite[Lemma 5]{MS1}, its 
triangulation has a triangle which meets each of those arcs. (It may also occur
that one corner of the triangle meets two arcs.) Thus, there are  
$i \in \{ 0,\dots, k \}$ and $i' \in \{k, \dots, n-k\}$ such that the 
vertices $t_i$ and $t_{i'}$ of $\PP(w)$ lie on that triangle. 
They correspond to the
vertices $y_1 = o^{a_1 \cdots a_i}$ and $y' =o^{a_1 \cdots a_{i'}}$ of $\XX$. 
We either have $i' - i \le 1$, or else a diagonal $(t_i,U,t_{i'})$ is a side of 
our triangle. By Lemma \ref{lem:diagonal}, we get 
$d(y_1,y') \le m(U) \le m$. Thus
$k \le i' \le d(o, y') \le i + m$, that is, $i \ge k-m > 0$. In particular, 
$t_i$ does not lie on the third arc. In the same way, there is
$j \in \{n-k, \dots, n-k+m\}$ (and not larger) such that $t_j$ is a corner
of our tiangle. This yields that there must be a ``true'' diagonal 
$(t_i,T,t_j)$ of $\PP(w)$. 
We set $v_1 = a_{i+1} \cdots a_k$ and $v_2 =a_{n-k+1} \cdots a_j$, so that
$x_1 = y_1^{v_1}\,$, and let $y_2 = x_2^{a_{n-k+1} \cdots a_j}$. 
The points $y_1$ and $y_2$ are in $D(C)$, and by Lemma \ref{lem:triangulation},
$T \thens v_1\ell(\pi)v_2\,$. 

[It is here that we can see Lemma \ref{lem:diam}, since we deduce
that $d(x_1,x_2) \le 3m$ for all $x_1,x_2 \in \bd C$.]

By Lemma \ref{lem:triangulation}, we also have 
$$
S \thens a_1\cdots a_i T a_{j+1} \cdots a_n\,,
$$
so that $v \in L_T$ implies $a_1\cdots a_i v a_{j+1} \cdots a_n \in L_{o,o}$ and
consequently $v \in L_{y_1,y_2}$, that is, $y_1^v=y_2$.

We now insert into $D(C)$ the additional labelled edge $(y_1, v_1Tv_2, y_2)$,
whose label is the word $v_1Tv_2 \in \Si^*\V\Si^*$. 
We insert all diagonals of the same type that can be obtained in the
same way, and write $\wt D(C)$ for the resulting ``edge-enrichment'' of
$D(C)$. 

Subsuming, we have an edge $(y_1, v_1Tv_2, y_2)$ in $\wt D(C)$ if and only
if the following properties hold.
\begin{itemize}
\item $\;|v_i| \le m$ ($i=1,2$) and $T \in \V\,$,
\item the path with label $v_1$ starting at $y_1$ and ending at 
$x_1=y_1^{v_1} \in \bd C$ is part of a geodesic from $o$ to $x_1\,$,
\item the path with label $v_2$ starting at $x_2=y_2^{-v_2} \in \bd C$ and 
ending at $y_2$ is part of a geodesic from $x_2$ to $o\,$, and
\item there is a path $\pi$ in $C$ from $x_1$ to $x_2$ such that
$T \thens  v_1\ell(\pi)v_2$, 
\item if $T \thens v \in \Si^*$ then $v$ is the label of a path in
$\Pi_{y_1,y_2}$.
\end{itemize}

Now, there are only finitely many cones $C$ with respect to $o$ with $d(\bd C, o) \le m$.
On the other hand, for all cones $C$ with $d(\bd C, o) \ge m$, there is a 
bound on the number of vertices of $\wt D(C)$,  as well as on the number of 
possible labels on its edges. In particular, there are only finitely many
possible isomorphism types of the labelled graphs $\bigl(\wt D(C),\bd C\bigr)$ 
with ``marked'' boundary $\bd C \subset \wt D(C)$. 

\smallskip

We now suppose that $C$ and $C'$ are two cones at distance $\ge m$ from $o$,
such that $\bigl(\wt D(C),\bd C\bigr)$ and $\bigl(\wt D(C'),\bd C'\bigr)$ 
are ismorphic. We claim that $C$ and $C'$ are isomorphic, and this will
conclude the proof that there are only  finitely many isomorphism types
of cones with respect to $o$. 

Let $\phi: \wt D(C) \to \wt D(C')$ be an isomorphism with 
$\phi(\bd C) = \bd C'$, and $\phi'$ its inverse
mapping. We extend $\phi$ to a mapping from $C$ to $C'$, also denoted $\phi$.

\smallskip

\noindent
\emph{Claim 1.} Let $x \in \bd C$ and $v \in \Si^+$ such that the
path $\pi_x(v)$ lies in $C$ and meets $\bd C$ only in its initial point
$x$. Then the path $\pi_{x'}(v)$ lies in $C'$ and meets $\bd C'$ only in 
its initial point $x'= \phi(x) \in \bd C'$.

\smallskip

\noindent
\emph{Proof.}
If $a$ is the initial letter of $v$ then (always using the notation 
of Definition \ref{def:pathfromx}) the first edge of $\pi_x(v)$ 
is $(x,a,x^a)$. We now consider the 
path $\pi_{x'}(v)$ with label $v$ starting at $x' \in \bd C'$. 
We first claim that the latter lies in $C'$ and only its initial point 
$x'$ is in $\bd C'$. Let $(x',a, (x')^a)$ be the first edge of the path. 
Then $(x')^a$ cannot lie in $\wt D(C')$, since otherwise  
$(x,a,x^a) = \bigl(\phi'(x'),a,\phi'(x')^a\bigr)$
would be an edge in $\wt D(C)$, a contradiction. Thus, the path $\pi_{x'}(v)$ 
goes at least initally into $C' \setminus \bd C$.

So now suppose that
$\pi_{x'}(v)$ ever returns to $\bd C'$, and let $\pi'$ be its initial part
up to the first return. Then $v' = \ell\bigl(\pi_{x'}(v)\bigr)$ is an initial
part of $v$ with $|v'| \ge 2$, and $\pi'$ is a path within $C'$ from $x_1'=x'$ to 
$x'_2 = (x')^{v'} \in \bd C'$. But then, by construction, $\wt D(C')$
must contain an edge $(y_1',v_1Tv_2,y_2')$ such that  $x_1'=(y_1')^{v_1}$,
$y_2'=(x_2')^{v_2}$, and $T \thens v_1v'v_2$. Using the isomorphism
$\phi': \wt D(C') \to \wt D(C)$, we set $y_i = \phi'(y_i')$, $i=1,2$,
and $x_2 = \phi'(x_2') \in \bd C$. We have of course $x_1 = \phi'(x_1')$.  
Now we must have the edge $(y_1,v_1Tv_2,y_2)$ in $\wt D(C)$. But then
$v_1v'v_2 \in L_{y_1,y_2}$, and consequently $v' \in L_{x_1,x_2}$,
that is, $x_1^{v'} \in \bd C$. But this contradicts the fact that $\pi_x(v)$
meets $\bd C$ only in its initial point. We conclude that also the path
$\pi_{x'}(v)$ lies in $C'$ and meets $\bd C'$ only in its inital point,
and Claim 1 is verified.

\smallskip

Now let $z \in C \setminus \bd C$. Then there are $x \in \bd C$ and 
$v \in  \Si^+$ such that $z=x^v$ and the path $\pi_x(v)$  from $x$ to $z$ meets $\bd C$ only 
in its initial point $x$. By Claim 1, the analogous statement holds
for the path $\pi_{x'}(v)$ in $C'$, where $x' = \phi(x)$. The only 
choice is to define $\phi(z) = z' = (x')^v$, which lies in $C' \setminus \bd C'$
as required. We have to show that $\phi$ is well-defined.
This will follow from the next claim.

\smallskip

\noindent
\emph{Claim 2.} Let $x_1, x_2 \in \bd C$, $v, w \in \Si^+$ such
that the paths $\pi_{x_1}(v)$ and $\pi_{x_2}(w)$ lie in $C$, meet $\bd C$
only in their initial points and end at the same point of $C \setminus \bd C$.
Then, setting $x_i' = \phi(x_i)$, also $\pi_{x_1'}(v)$ and $\pi_{x_2'}(w)$ 
end at the same point of $C' \setminus \bd C'$.

\smallskip

\noindent
\emph{Proof.} Let $w^{-1}$ be the ``inverse'' of $w$, as defined
in Definition \ref{def:pathfromx}. 
Then $x_2^{-w^{-1}} = x_2^w$, and $vw^{-1}$
is the label of the path from $x_1$ to $x_2$ that we obtain by first
following $\pi_{x_1}(v)$ and then the ``inverse'' of $\pi_{x_2}(w)$. It lies
entirely in $C$, and only its endpoints are in $\bd C$. 
By construction, $\wt D(C)$ has an edge $(y_1,v_1Tv_2,y_2)$
such that $y_1^{v_1} = x_1$, $x_2^{v_2} = y_2$ and $T \thens v_1v w^{-1} v_2$.
We set $y_i' = \phi(y_i)$, $i=1,2$. Then
$(y_1',v_1Tv_2,y_2')$ is an edge of $\wt D(C')$. Therefore 
$v_1v w^{-1} v_2 \in L_{y_1',y_2'}$. But this implies that $v w^{-1}$ is the
label of a path from $x_1'$ to $x_2'$, and we know from Claim 1 that it
lies in $C$ and has only its endpoints in $\bd C$. Thus 
$(x_1')^{v} = (x_2')^{-w^{-1}} = (x_2')^{w}$,
and Claim 2 is true.

Thus, $\phi$ is well defined, and the same works of course also for $\phi'$
by exchanging the roles of $C$ and $C'$. 

\smallskip
\noindent
\emph{Claim 3.} The map $\phi: C \to C'$ is bijective. 

\smallskip

\noindent
\emph{Proof.} We know that $\phi: \bd C \to \bd C'$ is bijective and
that $\phi(C \setminus \bd C) \subset C' \setminus \bd C$. Let 
$z \in C \setminus \bd C$, and let $x \in \bd C$, $v\in \Si^+$ such
that $\pi_x(v)$ is a path from $x$ to $z$ that intersects $\bd C$ only at
the initial point. Setting $x' = \phi(x)$, $z'=\phi(z)$, we know from 
the construction of $\phi$ and Claim 1 that $\pi_{x'}(v)$ is a path in
$C'$ from $x'$ to $z'$ that meets $\bd C'$ only in its initial point.
Now the way how $\phi'$ is constructed yields that $\phi'(z') = z$.
Therefore $\phi' \, \phi$ is the identity on $C$. Exchanging roles,
we also get the $\phi \, \phi'$ is the identity on $C'$. This proves
Claim 3.

\smallskip

It is now immediate from the construction that $\phi$ also preserves the 
edges and their labels, so that it is indeed an isomorphism between
the labelled graphs $C$ and $C'$ that sends $\bd C$ to $\bd C'$. 
\end{proof}

\cite[Cor. 2.7]{MS2} says that
if a symmetric labelled graph is context-free with respect
to one root $o$, then it is context-free with respect to any other
vertex chosen as the root $x$. In view of Theorems \ref{thm:characterize1}
and \ref{thm:characterize2},
this is also obtained from the following, when the graph is fully deterministic.

\begin{cor}\label{cor:Lxy}
Let $(\VV,E,\ell)$ be a fully deterministic, strongly connected graph with
label alphabet $\Si$. If $L_{o,o}$ is context-free then
$L_{x,y}$ is deterministic context-free for all $x, y \in \VV$.  
\end{cor}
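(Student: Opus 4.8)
The plan is to reduce Corollary~\ref{cor:Lxy} to the two main theorems already proved. First I would observe that the hypotheses are exactly those needed to apply Theorem~\ref{thm:characterize2}: the graph $(\VV,E,\ell)$ is fully deterministic, it is strongly connected, and $L_{o,o}$ is context-free. However, Theorem~\ref{thm:characterize2} also requires the graph to be \emph{symmetric}. So the first real step is to pass to a symmetric version of $\XX$. The natural device is to form the graph $\wh\XX$ over the doubled alphabet $\wh\Si = \Si \uplus \Si^{-1}$ (a disjoint formal copy), where for every edge $(x,a,y)$ of $\XX$ we add the reversed edge $(y,a^{-1},x)$. Since $\XX$ is fully deterministic, so is $\wh\XX$ with respect to $\wh\Si$, and by construction $\wh\XX$ is symmetric; moreover it is still strongly connected (indeed connected in the undirected sense). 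The map $w \mapsto w$ (viewing $\Si^*\subset\wh\Si^*$) shows $L_{o,o}(\XX) = L_{o,o}(\wh\XX) \cap \Si^*$, and conversely a standard homomorphism/intersection argument (or an explicit pushdown construction as in Lemma~\ref{lem:translator}) shows that $L_{o,o}(\wh\XX)$ is context-free whenever $L_{o,o}(\XX)$ is: one replaces each letter $a^{-1}$ read on the input by simulating the original automaton's behaviour on a path labelled $a$ traversed backwards, which is possible precisely because $\XX$ is deterministic so the "reverse" transitions are again functional.

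Next, having $\wh\XX$ symmetric, fully deterministic, and with $L_{o,o}(\wh\XX)$ context-free, Theorem~\ref{thm:characterize2} applies and tells us $\wh\XX$ is a context-free graph with respect to $o$. Then I would invoke Theorem~\ref{thm:characterize1}: since $\wh\XX$ is context-free with respect to the singleton $\{o\}$ and is deterministic, $L_{x,y}(\wh\XX)$ is a deterministic context-free language for all vertices $x,y$. Finally I would pull this back to $\XX$: the language $L_{x,y}(\XX)$ equals $L_{x,y}(\wh\XX)\cap\Si^*$, and intersection with the regular language $\Si^*$ preserves (deterministic) context-freeness --- here one uses that a deterministic pushdown automaton for $L_{x,y}(\wh\XX)$ can simply reject as soon as it reads a letter from $\Si^{-1}$, so determinism is maintained. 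This yields that $L_{x,y}(\XX)$ is deterministic context-free for every $x,y\in\VV$, as claimed.

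An alternative, perhaps cleaner, route avoids doubling the alphabet: one could instead observe that Theorem~\ref{thm:characterize1} as stated already gives, for a symmetric context-free graph, that \emph{all} $L_{x,y}$ are (deterministic) context-free, and that \cite[Cor.~2.7]{MS2} lets one move the root. The point of stating Corollary~\ref{cor:Lxy} separately is exactly to cover the non-symmetric but fully deterministic strongly connected case via the detour through $\wh\XX$; so I would present the symmetrization as the core of the argument. One subtle point worth spelling out: strong connectedness of $\XX$ (not just connectedness of the underlying undirected graph) is what guarantees that in $\wh\XX$ every vertex is still reachable by $\Si$-labelled paths and, more importantly, that $L_{x,y}(\XX)$ is nonempty for all $x,y$, so that the statement is not vacuous; and it is what lets us identify endpoints of reversed paths cleanly when translating the automaton.

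The main obstacle I expect is the careful verification that context-freeness (and, crucially, \emph{deterministic} context-freeness) is preserved under the passage $\XX \rightsquigarrow \wh\XX$ and back. The "back" direction (intersecting with $\Si^*$) is routine. The "forward" direction --- producing a deterministic pushdown automaton for $L_{o,o}(\wh\XX)$ from one for $L_{o,o}(\XX)$ --- requires using full determinism of $\XX$ in an essential way: the automaton must, upon reading $a^{-1}$, know it is at a vertex $y$ and move to the unique $x$ with $x^a = y$, which is a genuine constraint on the transition structure. I would either give this construction explicitly (in the spirit of Lemma~\ref{lem:translator}) or, more economically, phrase everything in terms of the graph: $\wh\XX$ is obtained from $\XX$ by adding, for each $a\in\Si$, edges that are literally the reversals of existing edges, and since $\XX$ is fully deterministic this operation sends cones to cones and preserves the property of having finitely many cone-isomorphism types once Theorem~\ref{thm:characterize2} is available for $\wh\XX$ --- but that would be circular, so the automaton-level argument is the safe one to record.
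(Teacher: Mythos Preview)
You have overlooked the standing hypothesis of \S\ref{sec:cfgraphs}: the very first sentence of that section reads ``In this section, we assume that $(\VV,E,\ell)$ is symmetric.'' Corollary~\ref{cor:Lxy} therefore already carries symmetry as a hypothesis, and the paper's proof is simply the two-line composition you yourself identified: Theorem~\ref{thm:characterize2} shows $\XX$ is a context-free graph with respect to $o$, and then Theorem~\ref{thm:characterize1} (with $F=\{o\}$) gives that every $L_{x,y}$ is deterministic context-free. No symmetrization detour is needed.

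Beyond being unnecessary, your symmetrization argument has a genuine gap. You assert that in $\wh\XX$ ``the `reverse' transitions are again functional'' because $\XX$ is deterministic. This is false: full determinism of $\XX$ says that for each vertex $x$ and each $a\in\Si$ there is exactly one \emph{outgoing} edge $(x,a,x^a)$; it says nothing about uniqueness of \emph{incoming} $a$-edges. Thus a vertex $y$ may have several predecessors $x$ with $x^a=y$, and then $\wh\XX$ has several outgoing edges at $y$ with the same label $a^{-1}$, so $\wh\XX$ is not deterministic and Theorem~\ref{thm:characterize2} does not apply to it. (A two-vertex example already exhibits this.) The ``simulate the reverse transition'' pushdown construction you sketch founders on exactly this point. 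So the detour is not only superfluous in the paper's setting, it would not work in the generality you intend.
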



Theorems \ref{thm:characterize1} and \ref{thm:characterize2}, together with
Lemma \ref{lem:translator} also imply the following.

\begin{cor}\label{cor:pairs}
Let $G$ be a finitely generated group and $K$ a subgroup.\\[5pt]
{\rm (a)} The pair $(G,K)$ is context-free if and only if for any symmetric
$\psi: \Si \to G$, the Schreier graph $\XX(G,K,\psi)$ is a context-free
graph. In this case, the language $L(G,K,\psi)$ is deterministic for
every (not necessarily symmetric) semigroup presentation 
$\psi:\Si \to G$.\\[5pt]
{\rm (b)} If $(G,K)$ is context-free, then also $(G,g^{-1}Kg)$ is
context-free for every $g \in G$. 
\end{cor}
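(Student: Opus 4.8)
The plan is to assemble the two parts from the machinery already developed, so that almost nothing new has to be invented. For part (a), I would argue as follows. By Corollary \ref{cor:indep}, context-freeness of the pair $(G,K)$ does not depend on the semigroup presentation, so it suffices to work with a symmetric $\psi: \Si \to G$ (such a $\psi$ always exists: take any presentation, adjoin formal inverses for each letter, and extend $\psi$ accordingly). For such a $\psi$, the Schreier graph $\XX = \XX(G,K,\psi)$ is fully deterministic (as noted right after Definition \ref{def:schreier}) and symmetric, and by Lemma \ref{lem:cfpaths} we have $L(G,K,\psi) = L_{o,o}(\XX)$ with $o = K$. Now apply Theorem \ref{thm:characterize2}: if $L_{o,o}(\XX)$ is context-free then $\XX$ is a context-free graph with respect to $o$, and $L_{o,o}(\XX)$ is in fact deterministic. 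Conversely, apply Theorem \ref{thm:characterize1}: if $\XX$ is a context-free graph then $L_{o,o}(\XX)$ is (deterministic) context-free. This gives the stated equivalence. For the final sentence of (a), suppose $(G,K)$ is context-free. By what we just proved, $L(G,K,\psi_0)$ is deterministic context-free for our chosen symmetric $\psi_0$. Given an arbitrary semigroup presentation $\psi: \Si \to G$, view $G$ itself as the finitely generated subgroup $H = G$ in Lemma \ref{lem:translator} (with $K \cap H = K$), applied with the two presentations $\psi_0$ and $\psi$: since $L(G,K,\psi_0)$ is deterministic context-free, Lemma \ref{lem:translator} yields that $L(G,K,\psi)$ is deterministic context-free as well.

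For part (b), the key observation is that conjugating $K$ corresponds to changing the root of the Schreier graph. Fix a symmetric $\psi: \Si \to G$. In $\XX(G,K,\psi)$ the vertex set is $K\backslash G$, and for $g \in G$ the vertex $Kg$ is the root of the Schreier graph $\XX(G, g^{-1}Kg, \psi)$ up to a label-preserving graph isomorphism: the map $Kh \mapsto g^{-1}Kg\,(g^{-1}h)$, i.e. $Kh \mapsto (g^{-1}Kg)(g^{-1}h)$, is a bijection $K\backslash G \to (g^{-1}Kg)\backslash G$ that intertwines the right-$\psi(a)$ edge relations, and it sends the coset $Kg$ to the coset $g^{-1}Kg$, the origin of $\XX(G, g^{-1}Kg, \psi)$. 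Hence $\XX(G, g^{-1}Kg, \psi)$ is isomorphic, as a labelled graph, to $\XX(G,K,\psi)$ with root moved from $o = K$ to the vertex $x = Kg$. Now if $(G,K)$ is context-free, then by part (a) $\XX(G,K,\psi)$ is a context-free graph with respect to $o$; by \cite[Cor. 2.7]{MS2} (or equivalently, via Theorems \ref{thm:characterize1} and \ref{thm:characterize2} together with Corollary \ref{cor:Lxy}, since $L_{x,x}$ is deterministic context-free for every vertex $x$) it is context-free with respect to any other root, in particular with respect to $x = Kg$. Transporting along the isomorphism, $\XX(G, g^{-1}Kg, \psi)$ is a context-free graph with respect to its origin, and by part (a) again $(G, g^{-1}Kg)$ is context-free.

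The only genuinely nontrivial inputs here are Theorems \ref{thm:characterize1} and \ref{thm:characterize2}, which are already proved, and the root-change invariance of \cite[Cor. 2.7]{MS2}; everything else is bookkeeping. The one place deserving a little care is the passage from "$L_{o,o}(\XX)$ context-free" to "$\XX$ context-free with respect to $o$" in part (a): this is exactly Theorem \ref{thm:characterize2}, and it requires $\XX$ to be fully deterministic and symmetric, which is why restricting to symmetric $\psi$ (legitimate by Corollary \ref{cor:indep}) is essential rather than cosmetic. I expect no real obstacle beyond stating the Schreier-graph conjugation isomorphism cleanly in part (b).
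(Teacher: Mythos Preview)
Your proof is correct and, for part (a), matches the paper's implicit argument (the paper just says ``(a) is clear''; you have spelled out exactly the intended chain through Lemma \ref{lem:cfpaths}, Theorems \ref{thm:characterize1} and \ref{thm:characterize2}, Corollary \ref{cor:indep}, and Lemma \ref{lem:translator}).

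For part (b) you take a slightly more roundabout route than the paper. You construct an explicit labelled-graph isomorphism $\XX(G,K,\psi) \to \XX(G,g^{-1}Kg,\psi)$ carrying the vertex $Kg$ to the new origin, then invoke root-change invariance of context-freeness at the graph level (via \cite[Cor.~2.7]{MS2} or Corollary \ref{cor:Lxy}) and transport back through part (a). The paper bypasses the graph isomorphism entirely: it simply observes that, computed \emph{in the same} Schreier graph $\XX(G,K,\psi)$, one has $L(G,g^{-1}Kg,\psi) = L_{x,x}$ with $x = Kg$ (since $\psi(w) \in g^{-1}Kg \iff Kg\,\psi(w) = Kg$), and then applies Corollary \ref{cor:Lxy} directly to that language. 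Your parenthetical remark about $L_{x,x}$ already contains this shortcut; promoting it to the main argument would make your proof coincide with the paper's and save the isomorphism bookkeeping.
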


\begin{proof} (a) is clear. Regarding (b), for the Schreier graph 
$\XX(G,K,\psi)$, we have $L(G,K,\psi)=L_{o,o}$ and 
$L(G,g^{-1}Kg,\psi) = L_{x,x}$ with
$x= Kg$, $g \in G$. Thus, the statement follows from Corollary
\ref{cor:Lxy}.
\end{proof}

\begin{lem}\label{lem:normal} Let $G$ be a finitely generated group
and $K, H$ be subgroups with $K \le H$ and $[H:K] < \infty$.

If $(G,K)$ is context-free then also $(G,H)$ is context-free.
\end{lem}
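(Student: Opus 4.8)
The plan is to argue at the level of the word-problem languages, using Corollary \ref{cor:indep}: since context-freeness of a pair is independent of the chosen semigroup presentation, it suffices to fix one (symmetric, if one likes) $\psi:\Si\to G$ and deduce that $L(G,H,\psi)$ is context-free from the hypothesis that $L(G,K,\psi)$ is.

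First I would decompose $H$ into right cosets of $K$. Since $[H:K]=r<\infty$, write $H=\biguplus_{i=1}^{r}Kt_i$ with $t_1,\dots,t_r\in H$; because $\psi(\Si^*)=G$ we may choose words $v_1,\dots,v_r\in\Si^*$ with $\psi(v_i)=t_i^{-1}$. Then for $w\in\Si^*$ we have the chain of equivalences: $\psi(w)\in H$ iff $\psi(w)\in Kt_i$ for some $i$, iff $\psi(wv_i)=\psi(w)t_i^{-1}\in K$ for some $i$, iff $wv_i\in L(G,K,\psi)$ for some $i$. Hence
\[
L(G,H,\psi)=\bigcup_{i=1}^{r}\bigl\{\,w\in\Si^*: wv_i\in L(G,K,\psi)\,\bigr\},
\]
which exhibits $L(G,H,\psi)$ as a finite union of right quotients of the context-free language $L(G,K,\psi)$ by the fixed words $v_i$ (equivalently, an iterated right quotient by single letters).

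The remaining step is to invoke the standard closure properties of the class of context-free languages: it is closed under right quotient by a regular language, in particular by a single word, and under finite union (see \cite{Ha}). Consequently $L(G,H,\psi)$ is context-free, so $(G,H)$ is context-free. There is essentially no hard step here; the only place that genuinely uses the hypothesis is $[H:K]<\infty$, which is exactly what makes the union finite and each $v_i$ a fixed finite word. If one prefers an argument that avoids citing the quotient-closure fact, one can instead build a pushdown automaton for $L(G,H,\psi)$ directly from one for $L(G,K,\psi)$: it guesses $i\in\{1,\dots,r\}$ at the outset, simulates the $L(G,K,\psi)$-automaton on the input, and upon exhausting the input continues the simulation internally on the fixed suffix $v_i$ before deciding acceptance. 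Alternatively one may phrase everything geometrically via the finite-to-one covering $Kg\mapsto Hg$ of Schreier graphs together with Theorems \ref{thm:characterize1} and \ref{thm:characterize2}, but the language-level computation above is the shortest route.
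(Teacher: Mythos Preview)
Your proof is correct, and in fact your decomposition coincides with the paper's: writing $F=\{Kt_i : 1\le i\le r\}$ as a set of vertices of $X(G,K,\psi)$, the set $\{w\in\Si^*: wv_i\in L(G,K,\psi)\}$ is precisely the path language $L_{o,Kt_i}$, so both arguments arrive at
\[
L(G,H,\psi)=\bigcup_{i=1}^{r} L_{o,Kt_i}\,.
\]
The genuine difference is in how context-freeness of each summand is justified. The paper first invokes Corollary~\ref{cor:pairs}(a) to pass to the context-free Schreier graph $X(G,K,\psi)$ and then Theorem~\ref{thm:characterize1} (equivalently Corollary~\ref{cor:Lxy}) to conclude that every $L_{o,x}$ is context-free; this is exactly the language-level reformulation highlighted in Remark~\ref{rmk:project}. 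You instead recognise each summand directly as the right quotient $L(G,K,\psi)/v_i$ and appeal to the standard closure of context-free languages under right quotient by a regular set and under finite union. Your route is the more elementary one: it stays entirely at the level of languages and does not need the graph-theoretic machinery of \S\ref{sec:cfgraphs} (in particular, neither Theorem~\ref{thm:characterize1} nor Theorem~\ref{thm:characterize2}). The paper's route, by contrast, is the one that fits its overall narrative and also delivers determinism of $L(G,H,\psi)$ for free, which the bare right-quotient argument does not.
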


\begin{proof}
In the context-free graph $\XX(G,K,\psi)$, consider
the finite set of vertices $F = \{ Kh : h \in H \}$, containing the
root vertex $o=o_K = K$. Then $L(G,H,\psi) = \bigcup_{x \in F} L_{o,x}$
is a finite (disjoint) union of context-free languages. Therefore it is
context-free by standard facts. 
\end{proof}

\begin{rmk}\label{rmk:project}
In terms of Schreier graphs, we
have the mapping $Kg \mapsto Hg$ which is a homomorphism of labelled
graphs from $X = X(G,K,\psi)$ onto $Y = X(G,K,\psi)$ which is finite-to-one.
The lemma says that in this situation, if $X$ is a context-free graph then
so is $Y$. We do not see an easy direct proof of this fact in terms of graphs,
the main problem being how the homomorphism $X \to Y$ interacts with the
isomorphisms between the cones of $X$ with respect to the set $F$.
On the other hand, reforomulating this in terms of the associated ``path 
languages'' with the help
of theorems \ref{thm:characterize1} and \ref{thm:characterize2}, it 
has become straightforward.
\end{rmk}

The converse of Lemma  \ref{lem:normal} is not true, that is, when $(G,H)$
is context-free and $[H:K] < \infty$ then $(G,K)$ is not necessarily context-
free. See Example \ref{exa:fin-ind} in the last section. However, we have
the following.

\begin{lem}\label{lem:Kfinite} If $K$ is a finite subgroup of $G$ then
$(G,K)$ is context-free if and only if $G$ is a context-free (i.e. virtually
free) group.
\end{lem}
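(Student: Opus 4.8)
The plan is to prove the two implications separately. The forward direction is immediate from earlier results: if $G$ is a context-free group, i.e. $(G,\{1_G\})$ is context-free, then since $\{1_G\}\le K$ and $[K:\{1_G\}]=|K|<\infty$, Lemma \ref{lem:normal} applied with the subgroups $\{1_G\}\le K$ gives at once that $(G,K)$ is context-free. The real content is the converse, $(G,K)$ context-free $\Rightarrow$ $G$ context-free (with $K$ finite), and I would prove it by comparing the Schreier graph of $(G,K)$ with the Cayley graph of $G$.

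So assume $(G,K)$ is context-free with $|K|<\infty$. Fix a symmetric semigroup presentation $\psi:\Si\to G$ (legitimate by Corollary \ref{cor:indep}); by Corollary \ref{cor:pairs}(a) the Schreier graph $\XX:=\XX(G,K,\psi)$ is a context-free graph with respect to $o=K$, and we must show that the Cayley graph $Z:=\XX(G,\psi)=\XX(G,\{1_G\},\psi)$, which is fully deterministic and symmetric, has context-free path language $L_{1_G,1_G}(Z)$. Consider the canonical label-preserving surjection $\rho:Z\to\XX$, $g\mapsto Kg$. It is a covering of graphs: it restricts at each vertex to a bijection of the outgoing (and of the ingoing) edges, and every fibre has $|K|$ elements. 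Lifting and projecting paths --- both length- and label-preserving --- one obtains $d_{\XX}(Kg,o)=d_Z(g,\wt F)$, where $\wt F:=\rho^{-1}(o)=\{k:k\in K\}$ is a set of $|K|$ vertices of $Z$; hence $\rho^{-1}\bigl(B(o,n)\bigr)=B(\wt F,n)$ for every $n$, so $\rho$ carries each connected component $C$ of $Z\setminus B(\wt F,n)$ onto a connected component $D$ of $\XX\setminus B(o,n)$. In other words, every cone $C$ of $Z$ with respect to $\wt F$ is, via $\rho$, a connected covering of at most $|K|$ sheets of some cone $D$ of $\XX$ with respect to $o$, with $\bd C=(\rho|_C)^{-1}(\bd D)$.

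The key step is then to deduce that the cones of $Z$ with respect to $\wt F$ fall into only finitely many isomorphism types as labelled graphs with boundary. Here one uses the full finite cone-structure of the context-free graph $\XX$: there are finitely many types of cones $D$, each has $\diam(\bd D)\le M$ (Lemma \ref{lem:diam}) and hence $\bd D$ of bounded size, $D$ splits as the finite piece $\bd D$ together with a bounded number of successor cones of finitely many types, and this recursion is encoded in the finite ``type graph'' $\Gamma$ appearing in the proof of Theorem \ref{thm:characterize1}. Correspondingly a cone $C$ of $Z$ over $D$ splits as a covering of the finite piece $\bd D$ (finitely many possibilities, since $|\bd D|$ and the sheet number $s_C\le|K|$ are bounded) with covers of the successor cones of $D$ glued on along the lifted boundary edges; these glued-on pieces are again cones of $Z$, and their sheet numbers are $\le|K|$ and do not increase as one descends. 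Running this descent through the finite graph $\Gamma$, together with the uniform bound $|K|$ on the number of sheets, shows that only finitely many isomorphism types of such cones occur. This finiteness statement is the main obstacle of the proof; an alternative formulation is the general fact that a covering of uniformly bounded degree of a context-free graph is again a context-free graph (with respect to the preimage of the root).

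Once the cones of $Z$ with respect to $\wt F$ have been shown to fall into finitely many types, $Z$ is a context-free graph with respect to the finite set $\wt F$, so Theorem \ref{thm:characterize1} gives that $L_{1_G,1_G}(Z)$ is a (deterministic) context-free language. By Lemma \ref{lem:cfpaths} this language is exactly $L(G,\{1_G\},\psi)$, hence $G$ is a context-free group --- equivalently, by Muller--Schupp and Dunwoody, $G$ is virtually free. This completes the argument.
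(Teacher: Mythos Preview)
Your easy direction (virtually free $\Rightarrow$ $(G,K)$ context-free via Lemma \ref{lem:normal}) is fine and is exactly what the paper leaves implicit.

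The hard direction, however, has a genuine gap. Your ``alternative formulation'' --- that a covering of uniformly bounded degree of a context-free graph is again context-free with respect to the preimage of the root --- is \emph{false}. The counterexample is Example \ref{exa:fin-ind} of this very paper: there $Y$ is a context-free graph (essentially a line with doubled edges), the graphs $X_W$ are $2$-fold label-preserving covers of $Y$, and yet for suitable $W$ (e.g.\ $W=\{k(|k|+1):k\in\ZZ\}$) the cover $X_W$ has infinitely many cone types. Your recursive argument breaks down for precisely this reason: knowing the cone type downstairs and the sheet number does \emph{not} determine the isomorphism type of the cone upstairs; the monodromy choices accumulate along the infinite recursion through~$\Gamma$ and can produce infinitely many non-isomorphic lifts.

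What the paper does instead is more modest but uses an extra ingredient you have not touched. From the covering $\rho:Z\to\XX$ one extracts only the \emph{bound on the size of the cone boundaries}: since $\bd C\subset\rho^{-1}(\bd D)$ and $|\bd D|$ is bounded by Lemma \ref{lem:diam}, one gets $|\bd C|\le |K|\cdot\max_D|\bd D|$. This says that all ends of the Cayley graph $Z$ are \emph{thin}. One then invokes the theorem of Thomassen--Woess \cite{ThWo}, which uses crucially that $Z$ is a Cayley graph (vertex-transitive), to conclude that $G$ is virtually free. The vertex-transitivity of $Z$ is exactly the structure that the general covers $X_W$ in Example \ref{exa:fin-ind} lack, and it is what makes the argument go through here while your purely graph-theoretic covering argument cannot.
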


\begin{proof} Fix $\Si$ and $\psi$. Let $X = X(G,\psi)$ be the associated
Cayley graph of $G$, and $Y = X(G,K,\psi)$. We let $o$ be the root of $Y$,
that is, $o=K1_G$ as an element of $Y$ (a coset). The group $K$ acts on $X$ by
automorphisms of that labelled graph. It leaves the set $F=K$ (now as a
set of vertices of $X$) invariant. The factor graph of $X$ by this action is
$Y$. Write $\pi$ for the factor mapping. It is $|K|$-to-one. Each cone of
$X$ with respect to $F$ is mapped onto a cone of $Y$ with respect to $o$,
and this mapping sends boundaries of cones of $X$ to boundaries of cones
of $Y$. By assumption,  $Y$ is a context-free graph. By Lemma \ref{lem:diam},
there is an upper bound on the number of elements in the latter boundaries.
Therefore there also is an upper bound on the number of elements of any of
the boundaries of the cones of $X$ with respect to $F$. 

Without going here into the details of the definition of the space of 
\emph{ends} of $X$, we refer to the terminology of {\sc Thomassen and 
Woess}~\cite{ThWo} and note that the above implies that all ends of $X$
are \emph{thin.} But then, as proved in \cite{ThWo}, $G$ must be a virtually
free group.
\end{proof}

One should not tend to believe that in the situation of the last lemma,
the Cayley graphs of $G$ are quasi-isometric with the Schreier graphs of
$(G,K)$. As a simple counter-example, take for $G$ the infinite dihedral group
$\langle a,b \mid a^2 = b^2 \rangle$ and for $K$ the 2-element subgroup
generated by $a$.


\section{Covers and Schreier graphs}\label{sec:covers}
We assume again that $(\VV,E,\ell)$ is symmetric and fully deterministic.
Recall the involution $a \mapsto a^{-1} \ne a$ of $\Si$. 
A word in $\Si^*$ is called reduced if it contains no subword of the form
$a a^{-1}$, where $a \in \Si$. We write $\T_{\Si}$ for the set of all
reduced words in $\Si^*$. We can equip $\T_{\Si}$ with the structure of
a labelled graph, whose edges are of the form
\begin{equation}\label{eq:treeedges}
(v, a, w) \AND (w, a^{-1},v)\,,
\quad \text{where}\;\; v, w \in \T_{\Si}\,,\; a \in \Si\,,\; va=w\,.
\end{equation} 
Thus, the terminal letter of $v$ must be different from $a^{-1}$. 
Then $\T_{\Si}$ is fully deterministic, and it is a \emph{tree,} that is,
it has no closed path whose label is a (non-empty) reduced word.  
As the root of $\T_{\Si},$ we choose the empty word $\epsilon$.
Then $\T_{\Si}$ is the \emph{universal cover} of $\XX$. Namely, if we choose
(and fix) any vertex $o \in \XX$ as the root, then the mapping
\begin{equation}\label{eq:cover}
\Phi: \T_{\Si} \to \XX\,,\quad \Phi(w) = o^w\,,
\end{equation}
is a \emph{covering map:} it is a surjective homomorphism between labelled graphs
which is a local isomorphism, that is, it is one-to-one between the sets of
outgoing (resp. ingoing) edges of any element $w \in \T_{\Si}$ and its
image $\Phi(w)$. (Note that this allows the image of an edge to be a loop.) 
``Universal'' means that it covers every other cover of $\XX$, but this is
not very important for us. The property of $w \in \T_{\Si}$ to be reduced
is equivalent with the fact that the path $\pi_o(w)$ in $\XX$ is 
\emph{non-backtracking,} that is, it does not contain two consecutive 
edges which are the reversal of each other.

We now realize that $\T_{\Si}$ is the standard Cayley graph of the free group
$\F_{\Si}$, where $\Si$ is the set of free generators together with their
inverses. The group product is the following: if 
$v, w \in \T_{\Si} \equiv \F_{\Si}\,$, then $v \cdot w$ is obtained from
the concatenated word $vw$ by step after step deleting possible subwords 
of the form $aa^{-1}$ that can arise from that concatenation.
The group identity is $\epsilon$, and the inverse of $w$ is $w^{-1}$
as at the end of Definition \ref{def:pathfromx}.
With $\Phi$ as in \eqref{eq:cover}, let 
\begin{equation}\label{eq:subgp}
\K = \K(\XX) = \Phi^{-1}(o) = \{ w \in \T_{\Si}  : \pi_o(w) 
\; \text{is a closed path from $o$ to $o$ in $\XX$}\,\}\,.
\end{equation}
Then, under the indentification $\T_{\Si} \equiv \F_{\Si}\,$, we clearly
have that $\K$ is a subgroup of $\F_{\Si}\,$. The following is known,
see e.g. {\sc Lyndon and Schupp}~\cite[Ch. III]{LySc} or (our personal source)
{\sc Imrich}~\cite{Im}.

\begin{pro}\label{pro:Schreier}
The graph $\XX$ is the Schreier graph of the pair of groups 
$\bigl(\F_{\Si},\K(\XX)\bigr)$ with respect to the semigroup presentation
$\psi$ given by $\psi(a) = a\,$, $a \in \Si$.
\end{pro}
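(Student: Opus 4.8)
The plan is to verify directly that the Schreier graph $\XX(\F_\Si, \K(\XX), \psi)$, as defined in Definition \ref{def:schreier}, is isomorphic as a rooted labelled graph to $\XX$ itself. First I would describe the vertex sets on both sides. By definition, the vertex set of $\XX(\F_\Si,\K,\psi)$ is the coset space $\K \backslash \F_\Si = \{\K w : w \in \T_\Si\}$ (using $\T_\Si \equiv \F_\Si$), while the root is the coset $\K = \K\epsilon$. I would introduce the map $\beta : \K \backslash \F_\Si \to \VV$ given by $\beta(\K w) = o^w = \Phi(w)$, where $\Phi$ is the covering map of \eqref{eq:cover}. The first task is to check that $\beta$ is well defined: if $\K w_1 = \K w_2$, then $w_2 w_1^{-1} \in \K$, so by \eqref{eq:subgp} the path $\pi_o(w_2 w_1^{-1})$ is closed at $o$; using that $\XX$ is deterministic and symmetric (Lemma \ref{def:pathfromx}), one peels off: $o^{w_2 w_1^{-1}} = o$ forces $o^{w_2} = o^{w_1}$, because reading $w_2 w_1^{-1}$ from $o$ is the same as first reading $w_2$ to reach $o^{w_2}$ and then reading $w_1^{-1}$; since $\XX$ is symmetric, $o^{w_1^{-1}}$ brings $o^{w_1}$ back to $o$, and determinism makes these identifications unambiguous even after reducing $w_2 w_1^{-1}$ in the free group (free reductions $aa^{-1}$ correspond to backtracking in $\XX$, which does not change the endpoint).

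Next I would check that $\beta$ is a bijection. Surjectivity is immediate: $\XX$ is connected (it is symmetric and, in the present context, we may take it strongly connected, or at any rate every vertex is reachable from $o$ by a path, whose label — after free reduction — is a reduced word $w \in \T_\Si$ with $o^w = x$). Injectivity is essentially the converse of the well-definedness computation: if $o^{w_1} = o^{w_2}$ then $o^{w_2 w_1^{-1}} = o$, so $\pi_o(w_2 w_1^{-1})$ is closed at $o$, hence after free reduction $w_2 w_1^{-1} \in \K$ (note that free reduction again corresponds to removing backtracking, which does not affect being a closed path), giving $\K w_1 = \K w_2$. Then I would verify that $\beta$ and its inverse respect edges and labels. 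In $\XX(\F_\Si,\K,\psi)$, there is an edge $(\K w, a, \K w')$ with label $a$ precisely when $\K w' = \K w \psi(a) = \K (wa)$ — here using that the group operation on $\T_\Si \equiv \F_\Si$ reduces $wa$. Under $\beta$, $\K w \mapsto o^w =: x$ and $\K(wa) \mapsto o^{wa} = x^a$ (reading one more letter $a$ from $x$; if the terminal letter of $w$ is $a^{-1}$ this is a backtracking step but still $o^{wa} = x^a$ holds as an endpoint identity). Since $\XX$ is fully labelled and deterministic, $x^a$ is exactly the unique vertex $y$ with edge $(x,a,y) \in E$, so the edge $(\K w, a, \K(wa))$ corresponds bijectively to the edge $(x,a,x^a)$ of $\XX$. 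Conversely every edge $(x,a,y)$ of $\XX$ arises this way by choosing any $w \in \T_\Si$ with $o^w = x$ and noting $y = x^a = o^{wa}$. This shows $\beta$ is an isomorphism of labelled graphs sending the root $\K = \K\epsilon$ to $o$.

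The main obstacle — or rather the only point requiring care — is the interplay between \emph{free reduction} in $\F_\Si$ and \emph{backtracking cancellation} of paths in $\XX$. The key observation, already recorded in the paragraph preceding the proposition in the excerpt, is that deleting a subword $aa^{-1}$ from a word $u$ does not change $o^u$: reading $a$ then $a^{-1}$ from any vertex returns to that vertex, because $\XX$ is symmetric (the $a^{-1}$-edge out of $o^{ua}$ is the reversal of the $a$-edge into it) and deterministic (so there is no ambiguity about which edge is retraced). Hence $o^u$ depends only on the freely reduced form of $u$, and in particular the maps $w \mapsto o^w$ and the coset multiplication $\K w \cdot \psi(a) = \K(wa)$ are compatible even though the latter involves a free reduction when $w$ ends in $a^{-1}$. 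Once this is isolated as a lemma-style remark, the rest of the verification is routine bookkeeping. I would also remark that one could alternatively cite \cite{LySc} or \cite{Im} for the standard correspondence between connected pointed labelled graphs (with the full-determinism hypothesis) and subgroups of free groups via $\pi_1$, but giving the explicit map $\beta$ as above is short and self-contained.
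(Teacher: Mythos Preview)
Your argument is correct. Note, however, that the paper does not actually supply a proof of this proposition: it is stated as known, with references to \cite{LySc} and \cite{Im}. Your closing remark already anticipates this, and your explicit bijection $\beta(\K w)=o^w$ together with the observation that $o^u$ depends only on the freely reduced form of $u$ (backtracking cancellation in the fully deterministic symmetric graph $\XX$) gives precisely the standard self-contained verification that those references encode. One small point worth making explicit in your well-definedness and injectivity steps: the map $y\mapsto y^{v}$ is a \emph{bijection} of $\VV$ for every $v\in\Si^*$, because $\XX$ is symmetric and fully deterministic (the inverse is $y\mapsto y^{v^{-1}}$); this is what lets you cancel the trailing $w_1^{-1}$ to conclude $o^{w_1}=o^{w_2}$ from $(o^{w_2})^{w_1^{-1}}=o=(o^{w_1})^{w_1^{-1}}$.
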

In $\psi(a) = a$, we interpret $a$ simultaneously as a letter from the alphabet
and as a generator of the free group.

Thus, in reality the study of context-free pairs of groups is the same
as the study of fully deterministic, \emph{symmetric} context-free graphs 
under a different viewpoint. 

The same is not true without assuming symmetry.
Indeed, given a semigroup presentation $\psi$ of $G$, for every $a \in \Sigma$
there must be $w_a \in \Sigma^*$ such $\psi(w_a) = \psi(a)^{-1}$, the inverse 
in $G$. But then in the Schreier Graph $X(G,K,\psi)$, for any subgroup 
$K$ of $G$, we have the following: if $(x,a,y) \in E$ then $y^{w_a} =x$, 
that is, there is the oriented path from $y$ to $x$ with label $w_a$. In 
a general fully deterministic graph this property does not necessarily hold,
even if it has the additional property that for each $a \in \Sigma$,
there is presicely one incoming edge with label $a$ at every vertex.
As an example, consider $X = \{x,y,z\}$, $\Sigma = \{a,b\}$ and
labelled edges $(x,a,y), (x,b,y), (y,a,z), (y,b,x), (z,a,x), (z,b,z)$.

We return to the situation of Proposition \ref{pro:Schreier}. As a subgroup 
of the free group, the group $\K(\XX)$ is itself free.
There is a method for finding a set of free generators. First recall
the notion of a \emph{spanning tree} of $\XX$. This is a tree $T$, which as
subgraph of $\XX$ is obtained by deleting  edges (but no vertices) 
of $\XX$. Every connected (non-oriented) graph has a spanning tree, for 
locally finite graphs it can be constructed inductively.
Now let $T$ be a spanning tree of $\XX$, and consider all edges of $\XX$ that
are not edges of $T$. They must come in pairs $(e, e^{-1})$. 
For each pair, we choose one of the two partner edges, and we write
$E_0$ for the chosen (oriented) edges. For each $e \in E_0$, we choose
non-backtracking paths in $T$ from $o$ to $e^-$ and from $e^+$ to $o$.
Together with $e$ (in the middle), they give rise to a non-backtracking path
in $\XX$ that starts and ends at $o$. Let $w(e)$ be the label on that path.
Then the following holds \cite{LySc}, \cite{Im}.

\begin{pro}\label{pro:generators}
As elements of $\F_{\Si}$, the $w(e)$, $e \in E_0$, are free generators of
$\K(\XX)$. 
\end{pro}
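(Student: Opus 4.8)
The plan is to exhibit $\K(\XX)$ as the fundamental group of the graph $\XX$ (with basepoint $o$), realized concretely inside the free group $\F_\Si$, and to invoke the classical topological description of a basis coming from a spanning tree. Concretely, fix a spanning tree $T$ of $\XX$, and let $E_0$ be the set of chosen orientations of the non-tree edges as in the statement. For each $e \in E_0$ with $e = (p,a,q)$, write $u_e$ for the label of the (unique) non-backtracking $T$-path from $o$ to $p = e^-$ and $v_e$ for the label of the non-backtracking $T$-path from $q = e^+$ to $o$; then $w(e) = u_e\,a\,v_e$ as an element of $\F_\Si$. One observes first that each $w(e)$ is reduced: the words $u_e$ and $v_e$ are reduced (they are labels of non-backtracking paths in a tree, hence reduced words by the discussion preceding Proposition \ref{pro:generators}), and no cancellation occurs at the two junctions because $e \notin T$ forces the last letter of $u_e$ to differ from $a^{-1}$ and the first letter of $v_e$ to differ from $a^{-1}$ — otherwise $e$ together with that adjacent tree edge would be a backtracking pair and the path $\pi_o(w(e))$ would not be non-backtracking. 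Thus $w(e) \in \T_\Si \equiv \F_\Si$, and by construction $\Phi(w(e)) = o^{w(e)} = o$, so $w(e) \in \K(\XX)$.

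Next I would prove that the $w(e)$, $e \in E_0$, generate $\K(\XX)$. Take any $w \in \K(\XX)$, i.e. a reduced word labelling a closed path $\pi = \pi_o(w)$ from $o$ to $o$ in $\XX$. I would argue by induction on the number of non-tree edges traversed by $\pi$ (counted with multiplicity). If $\pi$ uses only tree edges, then since $T$ is a tree and $\pi$ is a closed path, $\pi$ must be backtracking unless it is trivial; but $w$ is reduced, so $w = \epsilon$, the empty product. Otherwise, let $e^{\pm 1}$ be the first non-tree edge crossed by $\pi$, say $\pi = \pi_1\, e^{\varepsilon}\, \pi_2$ where $\pi_1$ lies in $T$ (from $o$ to $e^-$ or $e^+$) and $\varepsilon = \pm 1$. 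Then $\ell(\pi_1)$ equals the $T$-path label to that endpoint (since non-backtracking $T$-paths between two vertices are unique), so multiplying $w$ by $w(e)^{\mp 1}$ on the appropriate side and reducing, one obtains a new element of $\K(\XX)$ whose corresponding closed path crosses strictly fewer non-tree edges; the induction hypothesis finishes the argument. This is the step I expect to require the most care, because bookkeeping the cancellations in $\F_\Si$ and matching them to the removal of a non-tree edge from the geometric path needs to be done cleanly — essentially one is reproving the standard fact that $\pi_1(\XX, o)$ is free on the non-tree edges, but in the language of reduced words rather than homotopy classes.

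Finally I would establish freeness of the generating set $\{w(e) : e \in E_0\}$. The cleanest route is to note that the uniqueness of reduced representatives in $\F_\Si$, combined with the spanning-tree structure, already pins down which generator (and with which sign) is being applied at each stage of the reduction in the previous paragraph: the first non-tree edge and its orientation are read off directly from the reduced word $w$, hence a nontrivial reduced word in the $w(e)$ cannot represent $\epsilon$. Alternatively — and this is what I would actually cite — both the generation and the freeness are exactly the content of the Nielsen–Schreier rewriting process / the topological computation of $\pi_1$ of a graph, for which the references \cite{LySc} (Lyndon–Schupp, Ch.~III) and \cite{Im} (Imrich) are already invoked in the text; so the proof can legitimately be kept short, with the reduced-word verification of the two preceding paragraphs supplied as the concrete realization of that general machinery in the present setup. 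The main obstacle, then, is not conceptual but one of presentation: deciding how much of the Nielsen–Schreier argument to reproduce versus delegate to the cited sources, and making sure the non-backtracking/reduced-word translation (Definition \ref{def:pathfromx} and Proposition \ref{pro:Schreier}) is used consistently throughout.
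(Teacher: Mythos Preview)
Your sketch is correct and reproduces the standard Nielsen--Schreier argument; the minor care needed (determinism of $\XX$ to ensure no cancellation at the junctions of $u_e\,a\,v_e$, and that reducing $v_e^{-1}\ell(\pi_2)$ in $\F_\Si$ cannot increase the count of non-tree edges) is all implicit in the setup of \S\ref{sec:covers}. However, the paper itself gives \emph{no} proof of Proposition~\ref{pro:generators}: it is stated purely as a citation of \cite{LySc} and \cite{Im}, exactly as you anticipate in your final paragraph. So your proposal goes well beyond the paper's ``proof'', and the decision you flag --- how much to reproduce versus delegate --- is one the authors resolved by delegating entirely.
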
 

\begin{cor}\label{cor:fingen}
Let $G$ be a virtually free group and $K$ a finitely generated subgroup.
Then $(G,K)$ is context-free.
\end{cor}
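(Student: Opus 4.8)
The plan is to reduce the statement to the already-established machinery for context-free pairs over free groups. Since $G$ is virtually free, by definition it contains a free subgroup $F$ of finite index; we may of course assume $F$ is finitely generated (a finite-index subgroup of a finitely generated group is finitely generated). By Proposition \ref{pro:extend}, the pair $(G,K\cap F)$ being context-free would imply $(F, K\cap F)$ is context-free, and conversely $(F,K\cap F)$ context-free implies $(G,K\cap F)$ context-free; then since $[K:K\cap F]\le [G:F]<\infty$, Lemma \ref{lem:normal} applied to the pair $K\cap F \le K$ inside $G$ yields that $(G,K)$ is context-free. Note that $K\cap F$ is finitely generated: it is a finite-index subgroup of the finitely generated group $K$ (indeed $[K:K\cap F]\le[G:F]<\infty$). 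So the whole problem reduces to the case where $G$ itself is a finitely generated free group and $K$ is a finitely generated subgroup.

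So suppose now $G = \F$ is free of finite rank and $K$ is finitely generated. Fix a symmetric semigroup presentation: let $\Si$ be the free generators of $\F$ together with their inverses, $\psi(a)=a$. The Schreier graph $\XX = \XX(\F,K,\psi)$ is a fully deterministic, symmetric graph, and by Lemma \ref{lem:cfpaths} we have $L(\F,K,\psi)=L_{o,o}(\XX)$. By Theorem \ref{thm:characterize1} it suffices to show that $\XX$ is a context-free graph with respect to $o$, i.e.\ that $\XX$ has only finitely many isomorphism types of cones with respect to $\{o\}$. Now, the core structural fact to invoke is that $K$ is finitely generated: a finitely generated subgroup of a free group has a Schreier graph with a \emph{core} — a finite connected subgraph such that $\XX$ is obtained from the core by attaching, at finitely many vertices, copies of the regular tree $\T_\Si$ (the universal cover from \S\ref{sec:covers}). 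Concretely, running through Propositions \ref{pro:Schreier} and \ref{pro:generators}: $K$ being finitely generated means a spanning tree of $\XX$ can be chosen so that only finitely many edges lie outside it, whence all but finitely many vertices of $\XX$ have tree-like neighborhoods; removing a large enough ball $B(\{o\},n)$ around $o$ leaves components each of which is an isomorphic copy of a subtree of $\T_\Si$ hanging off a single boundary vertex of full degree. There are only finitely many isomorphism types of such tree-cones (they are determined by the label of the edge leading into them, hence at most $|\Si|$ types), plus for small radii $n$ only finitely many cones appear at all. Hence finitely many cone types, so $\XX$ is context-free, so $L_{o,o}(\XX)$ is context-free, so $(\F,K)$ is context-free.

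\textbf{Main obstacle.} The routine reductions via Propositions \ref{pro:extend}, \ref{pro:Schreier}, \ref{pro:generators} and Lemmas \ref{lem:normal}, \ref{lem:cfpaths} and Theorem \ref{thm:characterize1} are mechanical. The real content — and the step I expect to require the most care — is making precise the ``core plus trees'' description of the Schreier graph of a finitely generated subgroup of a free group: namely that, after deleting a sufficiently large ball around $o$, every remaining component is (isomorphic to) a hanging copy of a rooted subtree of the regular tree $\T_\Si$, with the isomorphism type pinned down by a finite amount of local data (essentially the label of the unique edge re-entering the ball). Equivalently, one must verify that outside a finite core subgraph, $\XX$ is locally isomorphic to $\T_\Si$ and that the finitely many ``infinite directions'' all look like pieces of $\T_\Si$ — this is exactly where finite generation of $K$ is used, via the finiteness of the generating set $\{w(e):e\in E_0\}$ from Proposition \ref{pro:generators} and the resulting finiteness of the set of edges outside a suitable spanning tree. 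Once that normal-form for $\XX$ is in hand, the finiteness of the set of cone types, and hence Definition \ref{def:cfgraph}, is immediate.
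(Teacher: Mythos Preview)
Your proposal is correct and follows essentially the same approach as the paper: reduce via Proposition~\ref{pro:extend} and Lemma~\ref{lem:normal} to a finitely generated subgroup $\K=K\cap\F$ of a finitely generated free group $\F$, then use Proposition~\ref{pro:generators} to see that the Schreier graph of $(\F,\K)$ is a tree with only finitely many extra edges added, so that cones sufficiently far from the root are cones of $\T_\Si$ and hence fall into finitely many isomorphism types. The paper's argument is slightly terser but structurally identical, and what you flag as the ``main obstacle'' is exactly the point the paper handles by observing that $E_0$ is finite and taking $n$ to be the maximal distance from $o$ to an endpoint of an edge in $E_0$.
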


\begin{proof} Let $\F = \F_{\Si}$ be a free subgroup of $G$ with
finite index. Then $\K = K \cap \F$ is a free subgroup of $K$ with
$[K:\K] < \infty$. Since $K$ is finitely generated, also 
$\K$ is finitely generated.  
In the Schreier graph $\XX$ of $(\F,\K)$ with respect to the standard 
labelling by $\Si$, choose a spanning tree and remaining set $E_0$ of edges,
as described above.  Since all sets of free generators of 
$\K$ must have the same cardinality, $E_0$ is
finite. Thus, $\XX$ is obtained by adding finitely many edges to a tree.
If $o$ is the root vertex of $\XX$ and $n$ is the largest distance between
$o$ and an endpoint of some edge in $E_0$, then every cone $C$ of $\XX$
whith $d(\bd C, o) > n$ is a rooted, labelled tree that is isomorphic with
one of the cones of $\T_{\Si}$. Thus, the Schreier graph, resp.
$(\F,\K)$ are context-free. It now follows from Proposition \ref{pro:extend} 
and Lemma \ref{lem:normal} that also $(G,K)$ is context-free.
\end{proof} 

We remark here that one can always reduce the study of context-free pairs
to free groups and their subgroups. Given $(G,K)$, let $\F$ be a finitely 
generated free group that maps by a homomorphism onto $G$. Let $\K$
be the preimage of $K$ under that homomorphism. Then clearly $(G,K)$ is
context-free if and only $(\F,\K)$ has this property. (This reduction,
however, is not very instructive.)

Of course, there are context-free pairs with $G$ free
beyond the situation of Corollary \ref{cor:fingen}.

\begin{exa}\label{exa:comb}
Consider the free group $\F = \langle a,b \mid \ \rangle$ and
the subgroup $\K$ with the infinite set of free generators
$\{ a^kb^lab^{-l}a^{-k} : k, l \in \ZZ\,,\; l \ne 0 \}$. The associated
Schreier graph with respect to $\{a^{\pm 1}\,,b^{\pm 1}\}$
is the \emph{comb lattice}. 

\begin{center}
\begin{picture}(220,260)
\qbezier[200](70,170)(20,190)(40,205) 
\qbezier[200](70,170)(50,220)(35,200) 
\put(39,204){\vector(1,1){1}}
\put(39,196){$a$}

\qbezier[200](70,20)(20,40)(40,55) 
\qbezier[200](70,20)(50,70)(35,50) 
\put(39,54){\vector(1,1){1}}
\put(39,46){$a$}

\qbezier[200](70,70)(20,90)(40,105) 
\qbezier[200](70,70)(50,120)(35,100) 
\put(39,104){\vector(1,1){1}}
\put(39,96){$a$}


\qbezier[200](170,170)(120,190)(140,205) 
\qbezier[200](170,170)(150,220)(135,200) 
\put(139,204){\vector(1,1){1}}
\put(139,196){$a$}

\qbezier[200](170,20)(120,40)(140,55) 
\qbezier[200](170,20)(150,70)(135,50) 
\put(139,54){\vector(1,1){1}}
\put(139,46){$a$}

\qbezier[200](170,70)(120,90)(140,105) 
\qbezier[200](170,70)(150,120)(135,100) 
\put(139,104){\vector(1,1){1}}
\put(139,96){$a$}

\qbezier[200](120,170)(70,190)(90,205) 
\qbezier[200](120,170)(100,220)(85,200) 
\put(89,204){\vector(1,1){1}}
\put(89,196){$a$}

\qbezier[200](120,20)(70,40)(90,55) 
\qbezier[200](120,20)(100,70)(85,50) 
\put(89,54){\vector(1,1){1}}
\put(89,46){$a$}

\qbezier[200](120,70)(70,90)(90,105) 
\qbezier[200](120,70)(100,120)(85,100) 
\put(89,104){\vector(1,1){1}}
\put(89,96){$a$}

\qbezier[200](220,170)(170,190)(190,205) 
\qbezier[200](220,170)(200,220)(185,200) 
\put(189,204){\vector(1,1){1}}
\put(189,196){$a$}

\qbezier[200](220,20)(170,40)(190,55) 
\qbezier[200](220,20)(200,70)(185,50) 
\put(189,54){\vector(1,1){1}}
\put(189,46){$a$}

\qbezier[200](220,70)(170,90)(190,105) 
\qbezier[200](220,70)(200,120)(185,100) 
\put(189,104){\vector(1,1){1}}
\put(189,96){$a$}

\put(-3,117){$\bf{\cdots}$}
\put(230,117){$\bf{\cdots}$}

\put(18,-11){$\bf{\vdots}$}
\put(68,-11){$\bf{\vdots}$}
\put(118,-11){$\bf{\vdots}$}
\put(168,-11){$\bf{\vdots}$}
\put(218,-11){$\bf{\vdots}$}

\put(18,245){$\bf{\vdots}$}
\put(68,245){$\bf{\vdots}$}
\put(118,245){$\bf{\vdots}$}
\put(168,245){$\bf{\vdots}$}
\put(218,245){$\bf{\vdots}$}

\put(0,72){$\bf{\ddots}$}
\put(0,22){$\bf{\ddots}$}
\put(0,222){$\bf{\ddots}$}
\put(0,172){$\bf{\ddots}$}
\put(50,222){$\bf{\ddots}$}
\put(100,222){$\bf{\ddots}$}
\put(150,222){$\bf{\ddots}$}
\put(200,222){$\bf{\ddots}$}

\put(20,120){\line(1,0){200}}

\put(120,120){\vector(1,0){28}}
\put(20,120){\vector(1,0){28}}
\put(70,120){\vector(1,0){28}}
\put(170,120){\vector(1,0){28}}


\put(20,0){\line(0,1){240}}
\put(70,0){\line(0,1){240}}
\put(120,0){\line(0,1){240}}
\put(170,0){\line(0,1){240}}
\put(220,0){\line(0,1){240}}

\put(20,20){\vector(0,1){27}}
\put(20,70){\vector(0,1){27}}
\put(20,120){\vector(0,1){27}}
\put(20,170){\vector(0,1){27}}

\put(70,20){\vector(0,1){27}}
\put(70,70){\vector(0,1){27}}
\put(70,120){\vector(0,1){27}}
\put(70,170){\vector(0,1){27}}

\put(120,20){\vector(0,1){27}}
\put(120,70){\vector(0,1){27}}
\put(120,120){\vector(0,1){27}}
\put(120,170){\vector(0,1){27}}

\put(170,20){\vector(0,1){27}}
\put(170,70){\vector(0,1){27}}
\put(170,120){\vector(0,1){27}}
\put(170,170){\vector(0,1){27}}

\put(220,20){\vector(0,1){27}}
\put(220,70){\vector(0,1){27}}
\put(220,120){\vector(0,1){27}}
\put(220,170){\vector(0,1){27}}
\put(120,120){\circle*{6}}
\put(120,70){\circle*{6}}
\put(120,20){\circle*{6}}
\put(120,170){\circle*{6}}
\put(120,220){\circle*{6}}

\put(170,120){\circle*{6}}
\put(170,70){\circle*{6}}
\put(170,20){\circle*{6}}
\put(170,170){\circle*{6}}
\put(170,220){\circle*{6}}

\put(220,120){\circle*{6}}
\put(220,70){\circle*{6}}
\put(220,20){\circle*{6}}
\put(220,170){\circle*{6}}
\put(220,220){\circle*{6}}

\put(20,120){\circle*{6}}
\put(20,70){\circle*{6}}
\put(20,20){\circle*{6}}
\put(20,170){\circle*{6}}
\put(20,220){\circle*{6}}

\put(70,120){\circle*{6}}
\put(70,70){\circle*{6}}
\put(70,20){\circle*{6}}
\put(70,170){\circle*{6}}
\put(70,220){\circle*{6}}

\put(122,110){\tiny $\bf{(0,{0})}$}
\put(172,110){\tiny $\bf{(1,{0})}$}
\put(222,110){\tiny $\bf{(2,{0})}$}
\put(72,110){\tiny $\bf{(-1,{0})}$}
\put(22,110){\tiny $\bf{(-2,{0})}$}

\put(122,60){\tiny $\bf{(0,{-1})}$}
\put(172,60){\tiny $\bf{(1,{-1})}$}
\put(222,60){\tiny $\bf{(2,{-1})}$}
\put(72,60){\tiny $\bf{(-1,{-1})}$}
\put(22,60){\tiny $\bf{(-2,{-1})}$}

\put(122,10){\tiny $\bf{(0,{-2})}$}
\put(172,10){\tiny $\bf{(1,{-2})}$}
\put(222,10){\tiny $\bf{(2,{-2})}$}
\put(72,10){\tiny $\bf{(-1,{-2})}$}
\put(22,10){\tiny $\bf{(-2,{-2})}$}

\put(122,160){\tiny $\bf{(0,{1})}$}
\put(172,160){\tiny $\bf{(1,{1})}$}
\put(222,160){\tiny $\bf{(2,{1})}$}
\put(72,160){\tiny $\bf{(-1,1)}$}
\put(22,160){\tiny $\bf{(-2,{1})}$}

\put(122,210){\tiny $\bf{(0,{2})}$}
\put(172,210){\tiny $\bf{(1,2)}$}
\put(222,210){\tiny $\bf{(2,{2})}$}
\put(72,210){\tiny $\bf{(-1,{2})}$}
\put(22,210){\tiny $\bf{(-2,{2})}$}

\put(142,124){$a$}
\put(92,124){$a$}
\put(42,124){$a$}
\put(192,124){$a$}

\put(73,138){$b$}
\put(23,138){$b$}
\put(123,138){$b$}
\put(173,138){$b$}
\put(223,138){$b$}

\put(73,188){$b$}
\put(23,188){$b$}
\put(123,188){$b$}
\put(173,188){$b$}
\put(223,188){$b$}

\put(73,38){$b$}
\put(23,38){$b$}
\put(123,38){$b$}
\put(173,38){$b$}
\put(223,38){$b$}

\put(73,88){$b$}
\put(23,88){$b$}
\put(123,88){$b$}
\put(173,88){$b$}
\put(223,88){$b$}

\end{picture}
\end{center}
\begin{center}
{\it Figure 3.}
\end{center}

Its vertex set is the set of integer points in the
plane. The edges labelled by $a$ are along the $x$-axis, from $(k,0)$ to 
$(k+1,0)$, and there is a loop with label $a$ at each point $(k,l)$ with 
$l \ne 0$. The edges labelled by $b$ are all the upward edges of the grid,
that is, all edges from $(k,l)$ to $(k,l+1)$, where $(k,l) \in \ZZ^2$.
To these, we have to add the oppositely oriented edges whose labels are the
respective inverses (in Figure 3, the oppositely oriented edges together with the
corresponding labels are omitted for simplicity). 
The comb lattice is clearly a context-free graph (tree).
\end{exa}


We proceed giving some simple examples. It is very easy to see that 
context-freeness is not ``transitive'' in the following sense:
if $(G,H)$ and $(H,K)$ are context-free (with $G,H$ finitely generated
and $K \le H \le G$) then in general $(G,K)$ will not be context-free.

\begin{exa}\label{exa:Z2} Let $G = \ZZ^2$, $H=\ZZ \times \{0\}\cong \ZZ$ and 
$K= \{(0,0)\}$. Then $H$ (i.e., $(H,K)$) is context-free. Of course, this also 
holds for $(G,H)$,  whose Schreier graphs are just the Cayley graphs of $\ZZ$. 
But $\ZZ^2$ (i.e., $(G,K)$) is not context-free. 
\end{exa}

This also shows that the reverse of Lemma \ref{lem:translator} does not hold
in general (while we know that it does hold when $[G:H] < \infty$).
Finally, we construct examples of three groups $K \le H \le G$, where
$(G,H)$ is context-free, $[H:K] < \infty$, and $(G,K)$ is not context-free.

\begin{exa}\label{exa:fin-ind}  
We construct a family of fully deterministic, symmetric labelled 
graphs $X_W\,$, $W \subset \ZZ$ (non-empty), and one such graph $Y$, so 
that $Y$ is the factor graph with respect to the action
of a 2-element group of automorphisms of each of the labelled graphs $X_W$. 
While $Y$ will be a context-free graph, many of the graphs $X_W$ in our family 
are not context-free. We then translate this back into the setting of pairs
of groups. 

The vertex set of $X_W$ is $\ZZ \times \{0,1\}$. The set of labels is 
$\Si = \{a, b, a^{-1}, b^{-1}\}$. The edges are as follows:
$$
\begin{aligned}
\bigl((k,0),a,(k+1,0)\bigr) \AND \bigl((k,1),a,(k+1,1)\bigr) \quad
&\text{for all}\; k \in \ZZ\,,\\ 
\bigl((k,0),b,(k+1,0)\bigr) \AND \bigl((k,1),b,(k+1,1)\bigr) \quad
&\text{for all}\; k \in \ZZ \setminus W\,,\AND\\ 
\bigl((k,0),b,(k+1,1)\bigr) \AND \bigl((k,1),b,(k+1,0)\bigr) \quad
&\text{for all}\; k \in  W\,.
\end{aligned}
$$
The reversed edges carry the respective inverse labels (in Figure 4, 
these reversed edges together with the corresponding labels are omitted for simplicity). 
Since $W \ne \emptyset$, there is at least one of the ``crosses''
(pair of the third type of edges). Therefore $X_W$ is connected.
In general, it does not have finitely many cone types, i.e., it is not
context-free. For example, it is not context-free 
when $W= \{k(|k|+1) : k \in \ZZ \}$

\begin{center}
\begin{picture}(330,130)

\put(30,30){\circle*{6}}
\put(80,30){\circle*{6}}
\put(130,30){\circle*{6}}
\put(180,30){\circle*{6}}
\put(230,30){\circle*{6}}
\put(280,30){\circle*{6}}
\put(330,30){\circle*{6}}

\put(30,80){\circle*{6}}
\put(80,80){\circle*{6}}
\put(130,80){\circle*{6}}
\put(180,80){\circle*{6}}
\put(230,80){\circle*{6}}
\put(280,80){\circle*{6}}
\put(330,80){\circle*{6}}

\put(15,15){\tiny $\bf{(-3,{1})}$}
\put(65,15){\tiny $\bf{(-2,{1})}$}
\put(115,15){\tiny $\bf{(-1,{1})}$}
\put(170,15){\tiny $\bf{(0,{1})}$}
\put(219,15){\tiny $\bf{(1,{1})}$}
\put(269,15){\tiny $\bf{(2,{1})}$}
\put(320,15){\tiny $\bf{(3,{1})}$}

\put(15,92){\tiny  $\bf{(-3,{0})}$}
\put(65,92){\tiny $\bf{(-2,{0})}$}
\put(115,92){\tiny $\bf{(-1,{0})}$}
\put(170,92){\tiny $\bf{(0,{0})}$}
\put(219,92){\tiny $\bf{(1,{0})}$}
\put(269,92){\tiny $\bf{(2,{0})}$}
\put(320,92){\tiny $\bf{(3,{0})}$}

\put(280,30){\vector(1,0){28}}
\put(180,30){\vector(1,0){28}}
\put(230,28){\vector(1,0){28}}
\put(230,32){\vector(1,0){28}}
\put(130,32){\vector(1,0){28}}
\put(130,28){\vector(1,0){28}}
\put(80,32){\vector(1,0){28}}
\put(80,28){\vector(1,0){28}}
\put(30,30){\vector(1,0){28}}

\put(280,30){\line(1,0){50}}
\put(180,30){\line(1,0){50}}
\put(230,28){\line(1,0){50}}
\put(230,32){\line(1,0){50}}
\put(130,32){\line(1,0){50}}
\put(130,28){\line(1,0){50}}
\put(80,32){\line(1,0){50}}
\put(80,28){\line(1,0){50}}
\put(30,30){\line(1,0){50}}

\put(280,80){\vector(1,0){28}}
\put(180,80){\vector(1,0){28}}
\put(230,78){\vector(1,0){28}}
\put(230,82){\vector(1,0){28}}
\put(130,82){\vector(1,0){28}}
\put(130,78){\vector(1,0){28}}
\put(80,82){\vector(1,0){28}}
\put(80,78){\vector(1,0){28}}
\put(30,80){\vector(1,0){28}}

\put(280,80){\line(1,0){50}}
\put(180,80){\line(1,0){50}}
\put(230,78){\line(1,0){50}}
\put(230,82){\line(1,0){50}}
\put(130,82){\line(1,0){50}}
\put(130,78){\line(1,0){50}}
\put(80,82){\line(1,0){50}}
\put(80,78){\line(1,0){50}}
\put(30,80){\line(1,0){50}}

\put(280,80){\vector(1,-1){35}}
\put(280,80){\line(1,-1){50}}
\put(180,80){\vector(1,-1){35}}
\put(180,80){\line(1,-1){50}}
\put(30,80){\vector(1,-1){35}}
\put(30,80){\line(1,-1){50}}

\put(280,30){\vector(1,1){18}}
\put(280,30){\line(1,1){50}}
\put(180,30){\vector(1,1){18}}
\put(180,30){\line(1,1){50}}
\put(30,30){\vector(1,1){18}}
\put(30,30){\line(1,1){50}}

\put(100,87){$a$}
\put(150,87){$a$}
\put(250,87){$a$}
\put(102,66){$b$}
\put(152,66){$b$}
\put(252,66){$b$}
\put(50,85){$a$}
\put(200,85){$a$}
\put(300,85){$a$}

\put(100,37){$a$}
\put(150,37){$a$}
\put(250,37){$a$}
\put(102,16){$b$}
\put(152,16){$b$}
\put(252,16){$b$}
\put(50,35){$a$}
\put(200,35){$a$}
\put(300,35){$a$}

\put(40,50){$b$}
\put(65,50){$b$}

\put(190,50){$b$}
\put(215,50){$b$}

\put(290,50){$b$}
\put(315,50){$b$}

\put(338,77){$\bf{\cdots}$}
\put(7,77){$\bf{\cdots}$}
\put(338,27){$\bf{\cdots}$}
\put(7,27){$\bf{\cdots}$}

\end{picture}
\end{center}
\begin{center}
{\it Figure 4.}
\end{center}
For arbitrary $W$, the two-element group that exchanges each $(k,0)$ with
$(k,1)$ acts on $X_W$ by label preserving graph automorphisms. 
The factor graph $Y$ (see Figure 5) has vertex set $\ZZ$ and edges 
$$
(k,a,k+1) \AND (k,b,k+1) \quad
\text{for all}\; k \in \ZZ\,,
$$
plus the associated reversed edges (in Figure 5, these edges together with the corresponding labels are
omitted for simplicity). It is clearly a context-free graph.
\begin{center}
\begin{picture}(330,60)

\put(30,30){\circle*{6}}
\put(80,30){\circle*{6}}
\put(130,30){\circle*{6}}
\put(180,30){\circle*{6}}
\put(230,30){\circle*{6}}
\put(280,30){\circle*{6}}
\put(330,30){\circle*{6}}

\put(15,15){\tiny $\bf{(-3,{1})}$}
\put(65,15){\tiny $\bf{(-2,{1})}$}
\put(115,15){\tiny $\bf{(-1,{1})}$}
\put(170,15){\tiny $\bf{(0,{1})}$}
\put(219,15){\tiny $\bf{(1,{1})}$}
\put(269,15){\tiny $\bf{(2,{1})}$}
\put(320,15){\tiny $\bf{(3,{1})}$}

\put(280,32){\vector(1,0){28}}
\put(280,28){\vector(1,0){28}}
\put(180,32){\vector(1,0){28}}
\put(180,28){\vector(1,0){28}}
\put(230,28){\vector(1,0){28}}
\put(230,32){\vector(1,0){28}}
\put(130,32){\vector(1,0){28}}
\put(130,28){\vector(1,0){28}}
\put(80,32){\vector(1,0){28}}
\put(80,28){\vector(1,0){28}}
\put(30,32){\vector(1,0){28}}
\put(30,28){\vector(1,0){28}}

\put(280,32){\line(1,0){50}}
\put(280,28){\line(1,0){50}}
\put(180,32){\line(1,0){50}}
\put(180,28){\line(1,0){50}}
\put(230,28){\line(1,0){50}}
\put(230,32){\line(1,0){50}}
\put(130,32){\line(1,0){50}}
\put(130,28){\line(1,0){50}}
\put(80,32){\line(1,0){50}}
\put(80,28){\line(1,0){50}}
\put(30,32){\line(1,0){50}}
\put(30,28){\line(1,0){50}}

\put(100,37){$a$}
\put(52,16){$b$}
\put(150,37){$a$}
\put(102,16){$b$}
\put(250,37){$a$}
\put(252,16){$b$}
\put(152,16){$b$}
\put(152,16){$b$}
\put(202,16){$b$}
\put(302,16){$b$}
\put(50,37){$a$}
\put(200,37){$a$}
\put(300,37){$a$}

\put(338,27){$\bf{\cdots}$}
\put(7,27){$\bf{\cdots}$}

\end{picture}
\end{center}
\begin{center}
{\it Figure 5.}
\end{center}
Now let $\F=\F_{\Si}$ be the free group (universal cover of $X_W$ and $Y$),
and for given  $W$, let $\K_W$ be the fundamental group of $X_W$ at
the vertex $(0,0)$. Furthermore, let $\K$ be the fundamental group of 
$Y$ at the vertex $0$. Then it is straightforward  that
$\K_W$ has index $2$ in $\K$. The mapping $\psi$ is the embedding of 
$\Si$ into $\F_{\Si}\,$, as above. We then have $Y= X(\F,\K,\psi)$ and 
$X_W = X(\F,\K_W,\psi)$, providing the required example.
\end{exa}

\end{document}